\def\MT@register@subst@font{\MT@exp@one@n\MT@in@clist\font@name\MT@font@list
   \ifMT@inlist@\else\xdef\MT@font@list{\MT@font@list\font@name,}\fi}
\newcommand{\setbuilder}[2]{\left\{#1\;\middle|\;#2\right\}}
\newcommand{\netbuilder}[2]{\left(#1\;\middle|\;#2\right)}
\newcommand{\set}[1]{\left\{#1\right\}}
\newcommand{\net}[1]{\left(#1\right)}
\newcommand{\epsi}{\varepsilon}
\newcommand{\fhi}{\varphi}
\newcommand{\card}[1]{\lvert#1\rvert}
\newcommand{\closure}[1]{\overline{#1}}
\DeclareMathOperator{\diam}{diam}
\DeclareMathOperator{\li}{Li}
\newcommand{\MST}{\operatorname{mst}}
\newcommand{\SMT}{\operatorname{smt}}
\newcommand{\numbersystem}[1]{\mathbb{#1}}
\newcommand{\bN}{\numbersystem{N}}
\newcommand{\bR}{\numbersystem{R}}
\newcommand{\CB}{\mathscr{B}}
\newcommand{\CC}{\mathscr{C}}
\newcommand{\CE}{\mathscr{E}}
\newcommand{\CG}{\mathscr{G}}
\newcommand{\CH}{\mathscr{H}}
\newcommand{\CP}{\mathscr{P}}
\newcommand{\CT}{\mathscr{T}}
\newcommand{\CV}{\mathscr{V}}
\newcommand{\CU}{\mathscr{U}}
\newcommand{\CX}{\mathscr{X}}
\theoremstyle{plain}
\newtheorem{theorem}{Theorem}
\newtheorem*{golabtheorem}{Go\l\k{a}b's Theorem}
\newtheorem{gtheorem}{Theorem}
\newtheorem*{choquettheorem}{Choquet's Theorem}
\newtheorem{corollary}[theorem]{Corollary}
\newtheorem{proposition}[theorem]{Proposition}
\newtheorem{lemma}[theorem]{Lemma}
\title{Outer linear measure of connected sets via Steiner trees}
\author{Konrad J.\ Swanepoel\thanks{Department of Mathematics, London School of Economics and Political Science, Houghton Street, London WC2A 2AE, United Kingdom. Email: 
    \href{mailto:k.swanepoel@lse.ac.uk}{k.swanepoel@lse.ac.uk}}}
\date{}
\begin{document}
\maketitle

\begin{abstract}
We resurrect an old definition of the linear measure of a metric continuum in terms of Steiner trees, independently due to Menger (1930) and Choquet (1938).
We generalise it to any metric space and provide a proof of a little-known theorem of Choquet that it coincides with the outer linear measure for any connected metric space.
As corollaries we obtain simple proofs of Go\l\k{a}b's theorem (1928) on the lower semicontinuity of linear measure of continua and a theorem of Bogn\'ar (1989) on the linear measure of the closure of a set.
We do not use any measure theory apart from the definition of outer linear measure.
\end{abstract}
\section{Introduction}
A well-known chestnut presents a sequence of arcs $\gamma_n\colon[0,1]\to\bR^2$ between fixed points $a$ and $b$ at distance $2$, each consisting of $2^{n-1}$ semicircular arcs of radius $2^{n-1}$, converging (uniformly) to the straight-line arc $\gamma_0\colon[0,1]\to\bR^2$, $\gamma_0(t)=(1-t)a+tb$ (Fig.~\ref{fig:circle}).
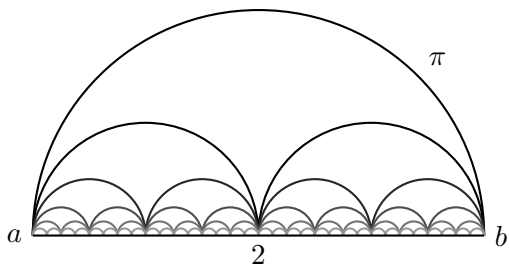
\begin{figure}[t]
\centering
\begin{tikzpicture}[thick, scale=3]
%    \fill[gray] (0,0) circle (0.5pt);
%    \draw[dotted, gray] (0,0) -- node[midway, below, black] {$1$} (1,0);
    \draw (0,0) node [below] {$2$};
%    \fill[gray] (1,0) circle (0.5pt);
    \draw (1,0) arc (0:180:1cm);
    \draw (45:1cm) node[above right] {$\pi$};
    \draw[black] (1,0) \foreach \x in {1,2} {arc (0:180:0.5cm)};
\draw[black!85] (1,0) \foreach \x in {1,...,4} {arc (0:180:0.25cm)};
\draw[black!70] (1,0) \foreach \x in {1,...,8} {arc (0:180:0.125cm)};
\draw[black!55] (1,0) \foreach \x in {1,...,16} {arc (0:180:0.0625cm)};
\draw[black!40] (1,0) \foreach \x in {1,...,32} {arc (0:180:0.03125cm)};
%\draw[black!25] (1,0) \foreach \x in {1,...,64} {arc (0:180:0.015625cm)};
%\draw[black!10] (1,0) \foreach \x in {1,...,128} {arc (0:180:0.0078125cm)};
\draw (-1,0) node[left] {$a$} -- (1,0) node[right] {$b$};
\end{tikzpicture}
\caption{A sequence of arcs, each of length $\pi$, converging to a segment of length $2$.}\label{fig:circle}
\end{figure}
That the length of each $\gamma_n$ is $\pi$ and of the limit curve is $2$ shows that arc length is not a continuous functional.
Instead, for any metric space $(X,d)$, arc length is a lower semicontinuous functional on the space $C([0,1],X)$ of continuous functions with the supremum norm.
This is most easily seen by noting that arc length can be defined as the supremum of the polygonal approximation functionals
\begin{equation}\label{eq:arclength}
\gamma\mapsto\sum_{i=1}^{n-1} d(\gamma(a_i),\gamma(a_{i+1})),
\end{equation}
where $0\leq a_1<\dots<a_n\leq 1$, each one of which is continuous, and applying the result that the supremum of a collection of lower semicontinuous functions is again lower semicontinuous.
Thus, the length of the limit arc can never be larger than the limit of the lengths of the arcs in the sequence, but can also be strictly smaller.

This observation also demonstrates that it is not completely straightforward that the Koch curve (Figure~\ref{koch0}) has infinite length, despite being the limit of curves $\kappa_n$ of length $(4/3)^n$, tending to infinity.
\begin{figure}
\centering
\begin{tikzpicture}[thin,decoration=Koch snowflake, scale=5]
    \draw[black!20] decorate{ (-1,0) -- (1,0) };
    \draw[black!30] decorate{ decorate{ (-1,0) -- (1,0) }};
    \draw[black!40] decorate{ decorate{ decorate{ (-1,0) -- (1,0) }}};
    \draw[thin,black!60] decorate{ decorate{ decorate{ decorate{ (-1,0) -- (1,0) }}}};
    \draw[thin,black!80] decorate{ decorate{ decorate{ decorate{ decorate{ (-1,0) -- (1,0) }}}}};
    \draw[thin,black] decorate{ decorate{ decorate{ decorate{ decorate{ decorate{ (-1,0) -- (1,0) }}}}}};
\end{tikzpicture}
\caption{The Koch curve}\label{koch0}
\end{figure}
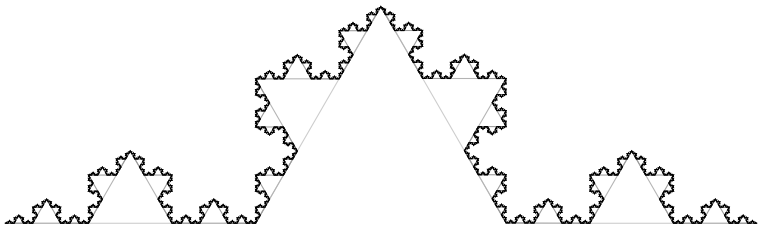
This point is usually glossed over in discussions of the Koch curve, including the original paper of Helge von Koch \cite{Koch1904}.
In Figure~\ref{koch2} we obtain $\delta_n$ by shrinking $\kappa_n$ by a linear factor of $(4/5)^{n-1}$ and adding segments at the two endpoints of the shrunken curve so that the endpoints are again at the same distance as before.
\begin{figure}
\centering
\begin{tikzpicture}[thin,decoration=Koch snowflake, scale=2.8]
    \draw decorate{ (-1,0) -- (1,0) } node[right=2mm] {$\delta_1$};

    \tikzmath{
        real \a, \b, \c;
        \a = 0.8;
        \b = \a;
        \c =-\b;
    }
    \draw[yshift=-0.4cm] (-1,0) -- (\c,0) decorate{ decorate{ -- (\b,0) }} -- (1,0) node[right=2mm] {$\delta_2$};

    \tikzmath{
        \b = \b * \a;
        \c =-\b;
    }
    \begin{scope}[yshift=-0.72cm]
        \draw (-1,0) -- (\c,0);
        \draw decorate{ decorate{ decorate{ (\c,0) -- (\b,0) }}};
        \draw (\b,0) -- (1,0) node[right=2mm] {$\delta_3$};
    \end{scope}

    \tikzmath{
        \b = \b * \a;
        \c =-\b;
    }
    \draw[yshift=-1cm] (-1,0) -- (\c,0) decorate{ decorate{ decorate{ decorate{ -- (\b,0) }}}} -- (1,0) node[right=2mm] {$\delta_4$};

    \tikzmath{
        \b = \b * \a;
        \c =-\b;
    }
    \begin{scope}[yshift=-1.25cm]
        \draw (-1,0) -- (\c,0);
        \draw[thin] decorate{ decorate{ decorate{ decorate{ decorate{ (\c,0) -- (\b,0) }}}}};
        \draw (\b,0) -- (1,0) node[right=2mm] {$\delta_5$};
    \end{scope}

    \tikzmath{
        \b = \b * \a;
        \c =-\b;
    }
    \begin{scope}[yshift=-1.5cm]
        \draw (-1,0) -- (\c,0);
        \draw[thin] decorate{ decorate{ decorate{ decorate{ decorate{ (\c,0) -- (\b,0) }}}}};
        \draw (\b,0) -- (1,0) node[right=2mm] {$\delta_6$};
    \end{scope}

    \tikzmath{
        \b = \b * \a;
        \c =-\b;
    }
    \begin{scope}[yshift=-1.7cm]
        \draw (-1,0) -- (\c,0);
        \draw[thin] decorate{ decorate{ decorate{ decorate{ decorate{ (\c,0) -- (\b,0) }}}}};
        \draw (\b,0) -- (1,0) node[right=2mm] {$\delta_7$};
    \end{scope}

    \tikzmath{
        \b = \b * \a;
        \c =-\b;
    }
    \begin{scope}[yshift=-1.9cm]
        \draw (-1,0) -- (\c,0);
        \draw[thin] decorate{ decorate{ decorate{ decorate{ decorate{ (\c,0) -- (\b,0) }}}}};
        \draw (\b,0) -- (1,0) node[right=2mm] {$\delta_8$};
    \end{scope}

    %\draw[yshift=-2cm] (-1,0) -- (1,0);
\end{tikzpicture}
\caption{Sequence of arcs of length $>(16/15)^n$ converging to a segment}\label{koch2}
\end{figure}
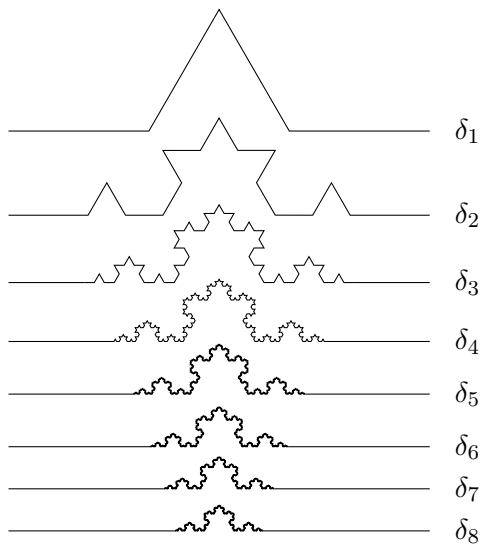
Then the length of $\delta_n$ is bounded from below by $(16/15)^{n}$, but the limit curve will be the segment $\gamma_0$.
In order to show that the Koch curve has infinite length, one has to observe that the vertices of each polygonal arc $\kappa_n$ occurs in all subsequent $\kappa_m$, $m>n$, hence also in the limit curve, and therefore by the elementary definition of arc length as the supremum of \eqref{eq:arclength}, the length of the Koch curve is bounded from below by the length of each $\kappa_n$.

The lower semicontinuity of arc length was generalised in the early $20$th century from arcs to \emph{continua}, that is, connected compact subsets of a metric space.
The first to publish a proof of the lower semicontinuity of the length of continua seems to be Stanis\l{}aw Go\l\k{a}b \cite{Golab1928}, and this theorem is nowadays known as Go\l\k{a}b's Theorem.
Recently, it has been used and generalised in existence proofs in the Calculus of Variations \cite{Dalmaso1992, DalmasoToader2002, PaoliniStepanov2013}.
See Section~\ref{history} for further historical remarks.

The main purpose of this paper is to give a simple, self-contained proof of Go\l\k{a}b's Theorem by using an almost forgotten definition of the length of continua due to Karl Menger \cite{Menger1931} and Gustave Choquet \cite{Choquet1938}.
This proof uses no measure theory apart from the definition of outer linear measure, and no results on the structure of continua.

\section{Five formulations of lower semicontinuity}
Let $(X,d)$ be a metric space.
Denote the open ball with centre $x\in X$ and radius $r>0$ by $B(x,r)=\setbuilder{y\in X}{d(x,y)<r}$ and
the power set of $X$ by
$\CP(X) = \setbuilder{A}{A\subseteq X}$.
For convenience we define $\sup\emptyset=0$ and $\inf\emptyset=\infty$.
Define the \emph{diameter} of $A\subseteq X$ as\[\diam(A)=\sup\setbuilder{d(a_1,a_2)}{a_1,a_2\in A}.\]
This gives an extended real-valued function $\diam\colon\CP(X)\to[0,\infty]$.

The \emph{outer linear measure} $L^*\colon\CP(X)\to[0,\infty]$ (also called $1$-dimensional outer Hausdorff measure) is defined as follows.
A countable family $\setbuilder{U_i}{i\in\bN}$ of subsets of $X$ is called a \emph{$\delta$-cover} of $A$ if $\diam(U_i)\leq\delta$ for all $i\in\bN$ and $A\subseteq\bigcup_{i\in\bN}U_i$.
For any $\delta>0$, let \[L^*_\delta(A)=\inf\setbuilder{\sum_{i\in\bN} \diam(U_i)}{\text{$\{U_i:i\in\bN\}$ is a $\delta$-cover of $A$}},\]
and let $L^*(A)=\sup\setbuilder{L^*_\delta(A)}{\delta>0} = \lim_{\delta\to 0}L^*_\delta(A)$.
Note that $L^*$ is \emph{monotone}: if $A\subseteq B$ then $L^*(A)\leq L^*(B)$.

Roughly, Go\l\k{a}b's theorem \cite{Golab1928} asserts that $L^*$ is lower semicontinuous on the collection of continua of $X$ with a suitable topology.
Instead of starting off with a discussion of these topologies, we give five concrete formulations of Go\l\k{a}b's Theorem, four of which can be found in the literature.
After giving a self-contained proof of the strongest of these, we discuss the topologies behind the different formulations.

We need the following definitions.
For $a\in X$ and $A,B\subseteq X$, define the \emph{distance from $a$ to $B$} as \[d(a,B)=\inf\setbuilder{d(a,b)}{b\in B},\]
the \emph{excess} (or \emph{Hausdorff hemimetric}) \emph{of $A$ over $B$} as \[e(A,B)=\sup\setbuilder{d(a,B)}{a\in A},\]
and the \emph{Hausdorff distance between $A$ and $B$} as \[D(A,B)=\max\set{e(A,B),e(B,A)}.\]
These again define extended real-valued functions $d\colon X\times\CP(X)\to[0,\infty]$, $e\colon \CP(X)^2\to[0,\infty]$ and $D\colon\CP(X)^2\to[0,\infty]$.
Note that, according to our definitions, $d(a,\emptyset)=e(A,\emptyset)=D(A,\emptyset)=\infty$ and $e(\emptyset,B)=0$.
A (metric) \emph{continuum} is a connected, compact metric space.

The following theorem was proved by Stanis\l{}aw Go\l\k{a}b in 1928 \cite{Golab1928}.
\begin{golabtheorem}
Let $A$ be a continuum and $\netbuilder{A_n}{n\in\bN}$ a sequence of continua in the metric space $X$ such that $\lim_{n\to\infty} D(A,A_n)=0$.
Then \[L^*(A)\leq\liminf\limits_{n\to\infty}L^*(A_n).\]
\end{golabtheorem}
There are various ways to strengthen this theorem.
It is not necessary to assume that $A$ is a continuum, as this follows from the convergence of $A_n$ to $A$ with respect to Hausdorff distance \cite[Theorem~3.18]{Falconer1986}.
Although Go\l\k{a}b's Theorem is usually formulated in terms of Hausdorff distance, Go\l\k{a}b already remarked that the theorem still holds if we only assume that $\lim_{n\to\infty} e(A,A_n)=0$.
However, this weaker assumption does not imply that $A$ is a continuum, as $\lim_{n\to\infty} e(A',A_n)=0$ holds for any $A'\subseteq A$.
This does not matter, as the compactness of $A$ or the $A_n$ is not essential for the conclusion to hold.
We thus state the following version of Go\l\k{a}b's Theorem.
\begin{gtheorem}\label{golab}
Let $\netbuilder{A_n}{n\in\bN}$ be a sequence of connected subsets of the metric space $X$ and $A\subseteq X$ such that $\lim_{n\to\infty} e(A,A_n)=0$.
Then \[L^*(A)\leq\liminf\limits_{n\to\infty}L^*(A_n).\]
\end{gtheorem}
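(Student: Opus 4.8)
The plan is to derive Theorem~\ref{golab} from the Menger--Choquet description of outer linear measure by Steiner trees. For a finite subset $F=\set{p_1,\dots,p_k}$ of the ambient metric space $X$, write $\SMT(F)$ for the \emph{Steiner-tree functional}: the infimum of $\sum_{uv\in E(T)}d(u,v)$ over all finite trees $T$ whose vertex set contains $F$, the remaining (Steiner) vertices ranging over $X$. The argument rests on two facts about $\SMT$. The first is a \emph{perturbation estimate}: if $q_1,\dots,q_k\in X$ satisfy $d(p_i,q_i)\le\eta$ for every $i$, then $\SMT(\set{q_1,\dots,q_k})\ge\SMT(F)-k\eta$; indeed, attaching the pendant edges $p_1q_1,\dots,p_kq_k$ to any Steiner tree for $\set{q_1,\dots,q_k}$ yields a finite tree containing $F$ whose length exceeds that of the original by at most $k\eta$. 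The second, and the real engine, is \emph{Choquet's theorem}: for every connected $Y\subseteq X$,
\[
L^*(Y)=\sup\setbuilder{\SMT(F)}{F\subseteq Y\text{ finite}};
\]
in particular $\SMT(F)\le L^*(Y)$ for every finite $F\subseteq Y$ when $Y$ is connected.

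Granting these, Theorem~\ref{golab} follows quickly, and it is here that the hypotheses are spent. Fix any real $t<L^*(A)$. By Choquet's theorem there is a finite $F=\set{p_1,\dots,p_k}\subseteq A$ with $\SMT(F)>t$; set $\eta=(\SMT(F)-t)/(2k)>0$. Since $e(A,A_n)\to0$, choose $N$ with $e(A,A_n)<\eta$ for all $n\ge N$; then for each such $n$ and each $i$ pick $q_i^{(n)}\in A_n$ with $d(p_i,q_i^{(n)})<\eta$. The perturbation estimate gives $\SMT(\set{q_1^{(n)},\dots,q_k^{(n)}})\ge\SMT(F)-k\eta>t$, and since $A_n$ is connected, Choquet's theorem applied to $A_n$ yields $L^*(A_n)\ge\SMT(\set{q_1^{(n)},\dots,q_k^{(n)}})>t$. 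Hence $L^*(A_n)>t$ for all $n\ge N$, so $\liminf_{n\to\infty}L^*(A_n)\ge t$; letting $t\uparrow L^*(A)$ (and reading $t$ over all reals when $L^*(A)=\infty$) completes the proof. Note how little is used: connectedness of the $A_n$ enters only through Choquet's theorem for each $A_n$, and $e(A,A_n)\to0$ serves only to approximate the single finite set $F$ by finite subsets of the $A_n$; no compactness, and no control over $e(A_n,A)$, is required.

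Thus everything rests on Choquet's theorem, and that is where I expect the difficulty to lie. The inequality $\SMT(F)\le L^*(Y)$ is the gentler half: starting from a $\delta$-cover of $Y$ of almost minimal total diameter, connectedness of $Y$ lets one thread the covering pieces into a tree on a refinement of $F$ whose edge lengths are dominated by the diameters of those pieces, and then $\delta\to0$. The reverse inequality $L^*(Y)\le\sup_F\SMT(F)$ is the crux: one must show that a connected set of large outer linear measure contains a finite set that \emph{no} short Steiner tree can span. The natural approach is a covering argument with $F$ a sufficiently fine net of $Y$: one shows that any Steiner tree for $F$ must already have length at least $L^*_\delta(Y)-o(1)$ as $\delta\to0$, since it is a connected, $\delta$-dense subset of $Y$. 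The main technical obstacle is running this without assuming $Y$ compact, which is handled by the elementary fact that a connected metric space is $\delta$-chain connected for every $\delta>0$.
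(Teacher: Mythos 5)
Your reduction is essentially the paper's own route: your perturbation estimate is Lemma~\ref{steiner2} (in the form $\SMT(P)\le\SMT(Q)+\card{P}\,e(P,Q)$, proved exactly by your pendant-edge attachment), your ``Choquet's theorem'' is Theorem~\ref{maintheorem}, and your $t$-argument compresses the paper's chain through Proposition~\ref{MClsc} and Theorems~\ref{lsc} and \ref{frink}. One step of the derivation itself needs repair: you invoke Choquet's theorem for the limit set $A$ to produce a finite $F\subseteq A$ with $\SMT(F)>t$, but $A$ is not assumed connected, and the hypothesis $e(A,A_n)\to 0$ does not make it so (it is inherited by every subset of a connected limit). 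What you need there is only the inequality $L^*(A)\le\sup_F\SMT(F)$, and this does hold for \emph{arbitrary} subsets --- that is precisely how Theorem~\ref{maintheorem} is stated --- but you must establish it in that generality, whereas your closing sketch of that direction leans on connectedness of $Y$.

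The genuine gap is that all of the substance is deferred to Choquet's theorem, and your sketch of its hard half $L^*(Y)\le\sup_F\SMT(F)$ would not run as described. In this setting a Steiner tree is an abstract weighted graph whose edges are unordered pairs of points of $X$, not arcs, so a Steiner tree on a fine net $F$ of $Y$ is not ``a connected, $\delta$-dense subset of $Y$'' and cannot be compared with $L^*_\delta(Y)$ by inclusion. The paper's argument runs in the opposite direction: from a near-optimal proper Steiner tree on an $\epsi$-net it \emph{manufactures} a $\delta$-cover of $A$, by bounding the number of maximal chains (Lemma~\ref{chain1}), cutting each chain into subchains of length at most $\delta/2$ (Lemma~\ref{chain2}), and absorbing the leftover components; even the existence of a finite $\epsi$-net has to be extracted from $L_{MC}(A)<\infty$ via Lemmas~\ref{moore2}--\ref{totallybounded}. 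Until that is supplied, what you have is a correct conditional reduction of Theorem~\ref{golab} to the paper's main theorem rather than a proof.
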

Independently of Go\l\k{a}b, in 1936 Orrin Frink \cite{Frink1936} proved the lower semicontinuity of $L^*$ on continua in the following form.
Although he proved only the first statement, the second statement follows in a very similar way.
Again we formulate it for connected sets.
\begin{gtheorem}\label{frink}
Let $A$ be a subset of the metric space $X$.
\begin{enumerate}
\item If $L^*(A)<\infty$ then for all $\epsi>0$ there exists $\delta>0$ such that for any connected $B\subseteq X$ with $e(A,B)<\delta$, $L^*(A)<L^*(B)+\epsi$.
\item If $L^*(A)=\infty$, then for all $M>0$ there exists $\delta>0$ such that for any connected $B\subseteq X$ with $e(A,B)<\delta$, $L^*(B)>M$.
\end{enumerate}
\end{gtheorem}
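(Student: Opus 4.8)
The plan is to obtain Theorem~\ref{frink} as an essentially immediate consequence of Theorem~\ref{golab} (the sequential form of Go\l\k{a}b's Theorem) by arguing by contradiction; since the reverse implication is just as easy, this at the same time records that Theorems~\ref{golab} and~\ref{frink} are merely two formulations of the same fact. If one instead wanted a proof not invoking Theorem~\ref{golab}, one would argue directly as in any proof of Go\l\k{a}b's Theorem --- cover $A$ by finitely many tiny balls and exploit the connectedness of $B$ to force within $B$ a path that, passing from ball to ball, is at least as long as a near-optimal Steiner tree on a suitable finite subset of $A$ --- but with Theorem~\ref{golab} available the deduction below is much shorter, and the genuine work has already been isolated there.

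For part~(1) we may assume $A\neq\emptyset$, since otherwise $L^*(A)=0<L^*(B)+\epsi$ for every connected $B\subseteq X$ and every $\epsi>0$, so any $\delta>0$ works. Suppose part~(1) fails for some $\epsi>0$. Taking $\delta=1/n$, for each $n\in\bN$ there is then a connected set $B_n\subseteq X$ with $e(A,B_n)<1/n$ and $L^*(A)\geq L^*(B_n)+\epsi$; note $B_n\neq\emptyset$ because $A\neq\emptyset$ and $e(A,\emptyset)=\infty$. Since $e(A,B_n)\to 0$, Theorem~\ref{golab} applied to $\netbuilder{B_n}{n\in\bN}$ gives $L^*(A)\leq\liminf_{n\to\infty}L^*(B_n)\leq L^*(A)-\epsi$, which is impossible as $L^*(A)<\infty$. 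For part~(2), suppose it fails for some $M>0$; taking $\delta=1/n$ yields connected sets $B_n\subseteq X$ with $e(A,B_n)<1/n$ and $L^*(B_n)\leq M$, and Theorem~\ref{golab} now gives $L^*(A)\leq\liminf_{n\to\infty}L^*(B_n)\leq M<\infty$, contradicting $L^*(A)=\infty$.

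Conversely, Theorem~\ref{golab} follows from Theorem~\ref{frink}: given connected $A_n$ with $e(A,A_n)\to 0$, if $L^*(A)<\infty$ then for each $\epsi>0$ part~(1) supplies $\delta>0$ with $L^*(A)<L^*(A_n)+\epsi$ for all sufficiently large $n$, whence $L^*(A)\leq\epsi+\liminf_{n\to\infty}L^*(A_n)$, and letting $\epsi\downarrow 0$ gives the claim; if $L^*(A)=\infty$, applying part~(2) with each $M>0$ shows $\liminf_{n\to\infty}L^*(A_n)\geq M$ for every $M$. Thus there is no real obstacle internal to Theorem~\ref{frink}: all the content lies in Theorem~\ref{golab} (and ultimately in the Steiner-tree description of $L^*$ from which it will be derived), and the only points needing a moment's care are the separate treatment of the cases $L^*(A)<\infty$ and $L^*(A)=\infty$, and keeping straight that the small quantity is $e(A,B)$ rather than $e(B,A)$ --- precisely the form of the hypothesis in Theorem~\ref{golab}.
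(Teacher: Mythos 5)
Your deduction of Theorem~\ref{frink} from Theorem~\ref{golab} is, as a piece of logic, carried out correctly: the contradiction argument with $\delta=1/n$ works in both cases, and the converse implication you record is exactly the one stated in the paper. The problem is that, within this paper, the argument is circular. The logical order here is the reverse of the one you assume: Theorem~\ref{golab} is obtained \emph{as a corollary of} Theorem~\ref{frink} (and also of Theorems~\ref{bognar} and~\ref{fremlin}), and the only version proved from scratch is Theorem~\ref{lsc}, which follows from Proposition~\ref{MClsc} (lower semicontinuity of $L_{MC}$, itself an easy consequence of Lemma~\ref{steiner2}) combined with Theorem~\ref{maintheorem} ($L^*=L_{MC}$ on connected sets). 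Theorem~\ref{golab} is never derived independently of Theorem~\ref{frink} from the Steiner-tree machinery, contrary to your parenthetical remark, so there is no prior source for the hypothesis your argument consumes. What you have actually established is the (worthwhile, but easy) equivalence of Theorems~\ref{golab} and~\ref{frink}; the obligation to prove either of them remains undischarged.

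The intended proof of Theorem~\ref{frink} is the easy implication from Theorem~\ref{lsc}: given the finite collection $\CU$ of open sets produced there, choose for each $U\in\CU$ a point $a_U\in A\cap U$ and a radius $r_U>0$ with $B(a_U,r_U)\subseteq U$, and set $\delta=\min_{U\in\CU} r_U$. If $e(A,B)<\delta$ then $d(a_U,B)<r_U$ for every $U$, so $B$ meets each $B(a_U,r_U)$ and hence hits $\CU$, and the conclusion of Theorem~\ref{lsc} applies. Your sketched ``direct'' argument --- cover $A$ by tiny balls and use the connectedness of $B$ to force a long tree inside $B$ --- is indeed the germ of the real proof, but it cannot be waved at: the passage between $\delta$-covers of a connected set and Steiner trees on finite subsets is precisely the content of the two halves of the proof of Theorem~\ref{maintheorem}, and that is where the work has to be done.
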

Theorem~\ref{golab} follows easily from Theorem~\ref{frink}.
Indeed, if $L^*(A)=\infty$, then Theorem~\ref{frink} gives that for any $M>0$ there exists $n_0\in\bN$ such that for all $n\geq n_0$, $L^*(A_n)>M$, which implies $\liminf_{n\to\infty} L^*(A_n)=\infty$, and if $L^*(A)<\infty$, then Theorem~\ref{frink} gives that for any $\epsi>0$ there exists $n_0\in\bN$ such that for all $n\geq n_0$, $L^*(A)<L^*(A_n)+\epsi$, which implies $L^*(A)\leq\liminf_{n\to\infty} L^*(A_n)+\epsi$.

A third form of Go\l\k{a}b's Theorem may be found in a 1989 paper of M\'aty\'as Bogn\'ar \cite{Bognar1989}.
Although Bogn\'ar only formulated it for sequences, the generalisation to nets presents no problems.
Let $(\Lambda,\preccurlyeq)$ be a directed set and $\netbuilder{A_\lambda}{\lambda\in\Lambda}$ a net of subsets of $X$.
We say that $x\in X$ is a \emph{limit point} of $\net{A_\lambda}$ if for each $\epsi>0$ there exists $\lambda_0\in\Lambda$ such that for all $\lambda\succ \lambda_0$, $A_\lambda\cap B(x,\epsi)\neq\emptyset$.
Then the (Painlev\'e--Kuratowski) \emph{lower limit} of $\net{A_\lambda}$, denoted by $\li A_\lambda$, is defined to be the set  of all limit points of $\net{A_\lambda}$.
(The topology defined by this convergence is not necessarily first countable, hence the use of nets.)
\begin{gtheorem}\label{bognar}
Let $\net{A_\lambda}$ be a net of connected subsets of a metric space $X$.
Then \[L^*(\li A_\lambda)\leq\liminf_{\lambda}L^*(A_\lambda).\]
\end{gtheorem}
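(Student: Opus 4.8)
The plan is to deduce Theorem~\ref{bognar} from Frink's Theorem~\ref{frink}, applied to the set $F\coloneqq\li A_\lambda$ itself. If $\liminf_\lambda L^*(A_\lambda)=\infty$ there is nothing to prove, so put $c\coloneqq\liminf_\lambda L^*(A_\lambda)<\infty$; we must show $L^*(F)\le c$. The intended mechanism is simple: if for every prescribed $\delta>0$ one has $e(F,A_\lambda)<\delta$ for all sufficiently large $\lambda$, then Theorem~\ref{frink} with $A=F$ and $B=A_\lambda$ finishes the job. The subtlety -- and the main obstacle -- is that $e(F,A_\lambda)=\sup\setbuilder{d(x,A_\lambda)}{x\in F}$ is a supremum over \emph{all} of $F$, whereas the defining property of $F=\li A_\lambda$ only tells us, for each individual $x\in F$, that $d(x,A_\lambda)$ is eventually small. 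To upgrade this to a uniform statement I will prove that $F$ is totally bounded; this is where the hypothesis $c<\infty$ is genuinely used.

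The key auxiliary estimate is elementary: if a connected set $S$ contains $N$ points that are pairwise at distance at least $t$, then $L^*(S)\ge Nt/2$. Indeed, for $r<t/2$ the $N$ closed balls of radius $r$ about these points are pairwise disjoint; for each such ball, $S$ contains its centre and also a point at distance more than $r$ from that centre (another of the $N$ points), so by connectedness the $1$-Lipschitz function ``distance to the centre'' maps the part of $S$ lying in the closed ball onto $[0,r]$, whence $L^*$ of that part is at least $r$; since the $N$ pieces are pairwise separated by a positive distance, summing and letting $r\to t/2$ gives the claim. Now set $\Lambda_0\coloneqq\setbuilder{\lambda}{L^*(A_\lambda)<c+1}$, which is cofinal in $\Lambda$. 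Given pairwise $t$-separated points $x_1,\dots,x_N\in F$, choose for each $i$ an index $\lambda_i$ with $d(x_i,A_\lambda)<t/4$ whenever $\lambda\succcurlyeq\lambda_i$; by directedness of $\Lambda$ and cofinality of $\Lambda_0$ there is $\lambda^\ast\in\Lambda_0$ with $\lambda^\ast\succcurlyeq\lambda_i$ for every $i$. Picking $y_i\in A_{\lambda^\ast}$ with $d(x_i,y_i)<t/4$ produces $N$ points of the connected set $A_{\lambda^\ast}$ that are pairwise at distance more than $t/2$, so the estimate gives $Nt/4\le L^*(A_{\lambda^\ast})<c+1$, i.e.\ $N<4(c+1)/t$. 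Thus for every $t>0$ the set $F$ contains no infinite $t$-separated subset, which is exactly total boundedness.

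From here everything is routine. Fix $\delta>0$ and take a finite $(\delta/3)$-net $\set{x_1,\dots,x_m}\subseteq F$ (possible since $F$ is totally bounded, with the net points chosen in $F$). Since each $x_i\in\li A_\lambda$, there is $\lambda_0$ -- obtained from the finitely many individual thresholds by directedness -- such that $d(x_i,A_\lambda)<\delta/3$ for all $i$ and all $\lambda\succcurlyeq\lambda_0$, whence $e(F,A_\lambda)\le 2\delta/3<\delta$ for all $\lambda\succcurlyeq\lambda_0$. Now apply Theorem~\ref{frink} with $A=F$. If $L^*(F)=\infty$, part~(2) with $M=c+1$ produces a $\delta>0$ such that every connected $B$ with $e(F,B)<\delta$ has $L^*(B)>c+1$; but the cofinal set $\Lambda_0$ contains some $\lambda\succcurlyeq\lambda_0$, giving the contradiction $L^*(A_\lambda)>c+1$ and $L^*(A_\lambda)<c+1$. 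Hence $L^*(F)<\infty$, and for any $\epsi>0$ part~(1) supplies a $\delta>0$ with $L^*(F)<L^*(B)+\epsi$ for every connected $B$ with $e(F,B)<\delta$; choosing $\lambda\succcurlyeq\lambda_0$ in the cofinal set $\setbuilder{\lambda}{L^*(A_\lambda)<c+\epsi}$ and taking $B=A_\lambda$ yields $L^*(F)<c+2\epsi$. Letting $\epsi\to0$ gives $L^*(F)\le c$, as asserted.

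Two remarks. The argument uses only Theorem~\ref{frink} and wholly elementary properties of $L^*$ ($1$-Lipschitz maps do not increase it, $L^*([0,r])=r$, and it is superadditive over finitely many sets that are pairwise separated by a positive distance), and nothing about the structure of continua; note in particular that $F$ need not be compact (for $X=[0,1)$ and $A_n=[0,1-1/n]$ one gets $F=[0,1)$), which is precisely why Theorem~\ref{frink} is invoked for the possibly non-compact, but totally bounded, set $F$ rather than for compact subsets. Finally, Theorem~\ref{golab} drops out as the special case $\Lambda=\bN$: the hypothesis $e(A,A_n)\to0$ forces $d(x,A_n)\to0$ for each $x\in A$, hence $A\subseteq\li A_n$, so $L^*(A)\le L^*(\li A_n)\le\liminf_n L^*(A_n)$ by monotonicity and Theorem~\ref{bognar}.
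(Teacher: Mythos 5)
Your proof is correct, but it reaches Theorem~\ref{bognar} by a genuinely different route from the paper. The paper derives Theorem~\ref{bognar} as an essentially immediate consequence of Theorem~\ref{fremlin} (itself obtained from Theorem~\ref{lsc} via the Menger--Choquet machinery): because the hypothesis of Theorem~\ref{fremlin} involves only a \emph{finite} collection of open sets, it matches the pointwise definition of $\li A_\lambda$ with no work --- one picks a ball inside each open set centred at a point of $\li A_\lambda$, uses directedness and the cofinality of $\setbuilder{\lambda}{L^*(A_\lambda)<\alpha+\epsi}$ to find a single connected $A_\lambda$ hitting all of them, and applies the theorem. You instead deduce Theorem~\ref{bognar} from Theorem~\ref{frink}, whose hypothesis $e(F,B)<\delta$ is uniform over all of $F=\li A_\lambda$; the real content of your argument is the bridge from pointwise to uniform, which you supply by showing $F$ is totally bounded whenever $\liminf_\lambda L^*(A_\lambda)<\infty$. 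That step is sound: your packing estimate ($N$ pairwise $t$-separated points in a connected $S$ force $L^*(S)\geq Nt/2$) transfers separated points of $F$ into separated points of a single $A_{\lambda^*}$ of small measure, and the resulting cardinality bound is exactly total boundedness --- note that you correctly derive this from the finiteness of the liminf rather than from any assumption on $L^*(F)$, which is what lets the infinite case go through by contradiction. This mirrors the discussion in Section~\ref{topology}, where it is observed that the lower Vietoris and lower Hausdorff topologies coincide on totally bounded sets; in effect you prove the instance of that equivalence needed here. The trade-off: your route is independent of Theorems~\ref{fremlin} and \ref{lsc}, but it requires a few additional elementary properties of $L^*$ not stated in the paper ($L^*$ does not increase under $1$-Lipschitz maps, $L^*([0,r])=r$, and superadditivity over finitely many positively separated pieces), each provable directly from the definition of $L^*_\delta$, whereas the paper's derivation from Theorem~\ref{fremlin} needs none of these and is a three-line verification.
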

It is again easy to see that Theorem~\ref{bognar} implies Theorem~\ref{golab}.
Indeed, for any $\epsi>0$ there exists $n_0\in\bN$ such that $e(A,A_n)<\epsi$ for all $n\geq n_0$.
Thus for any $x\in A$ we have $d(x,A_n)<\epsi$, which implies that $x\in\li_n A_n$.
Then Theorem~\ref{bognar} and the monotonicity of $L^*$ gives that $L^*(A)\leq L^*(\li A_n)\leq\liminf_{n\to\infty} L^*(A_n)$.

A fourth form of Go\l\k{a}b's Theorem appears in a 1992 paper of David Fremlin \cite[Lemma~5C]{Fremlin1992}.
We say that a set $A$ \emph{hits} a collection $\CU$ of sets if $A\cap U\neq\emptyset$ for all $U\in\CU$.
\begin{gtheorem}\label{fremlin}
Let $A$ be a subset of the metric space $X$ and let $\alpha\geq 0$.
Suppose that for any finite collection $\CU$ of open sets in $X$ which is hit by $A$, there exists a connected $C\subset X$ that hits $\CU$, such that $L^*(C)\leq\alpha$.
Then $L^*(A)\leq\alpha$.
\end{gtheorem}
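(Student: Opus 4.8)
The plan is to deduce Theorem~\ref{fremlin} from Theorem~\ref{bognar} by building a net of connected sets, indexed by finite families of ``probe balls'' centred at points of $A$, whose Painlev\'e--Kuratowski lower limit contains $A$. Since the conclusion is trivial when $A=\emptyset$ (as $L^*(\emptyset)=0\leq\alpha$), I assume $A\neq\emptyset$.

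As directed set I would take $\Lambda$, the collection of all finite subsets of $A\times(0,\infty)$, preordered by declaring $F\preccurlyeq G$ when for every $(x,\epsi)\in F$ there is some $(x,\epsi')\in G$ with $\epsi'\leq\epsi$; this is directed because $F\cup G$ dominates both $F$ and $G$. To $\lambda=\set{(x_1,\epsi_1),\dots,(x_k,\epsi_k)}\in\Lambda$ I associate the finite family $\CU_\lambda=\set{B(x_i,\epsi_i):1\leq i\leq k}$ of open sets, which is hit by $A$ since $x_i\in A\cap B(x_i,\epsi_i)$. The hypothesis of Theorem~\ref{fremlin} then supplies a connected set $A_\lambda\subset X$ hitting $\CU_\lambda$ with $L^*(A_\lambda)\leq\alpha$.

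Next I would verify that $A\subseteq\li A_\lambda$. Fix $x\in A$ and $\epsi>0$; the singleton $\lambda_0=\set{(x,\epsi)}$ belongs to $\Lambda$, and any $\lambda\succcurlyeq\lambda_0$ contains a pair $(x,\epsi')$ with $\epsi'\leq\epsi$, so $A_\lambda$ meets $B(x,\epsi')\subseteq B(x,\epsi)$; hence $x$ is a limit point of $\net{A_\lambda}$. Monotonicity of $L^*$ gives $L^*(A)\leq L^*(\li A_\lambda)$, and Theorem~\ref{bognar}, applied to the net $\net{A_\lambda}$ of connected subsets of $X$, gives $L^*(\li A_\lambda)\leq\liminf_\lambda L^*(A_\lambda)\leq\alpha$ because every $A_\lambda$ satisfies $L^*(A_\lambda)\leq\alpha$. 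Chaining the two inequalities yields $L^*(A)\leq\alpha$.

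The one delicate point is the choice of preorder on $\Lambda$: it must be coarse enough to stay directed---so that the hypothesis is only ever applied to a \emph{finite} family of open sets---yet fine enough that, for each fixed $x\in A$ and each scale $\epsi>0$, the associated connected set is eventually forced to meet $B(x,\epsi)$. Beyond this bookkeeping I do not expect a genuine obstacle: the argument uses only monotonicity of $L^*$ together with Theorem~\ref{bognar}, and needs no measure theory and no structure theory of connected sets. (Should one prefer not to invoke Theorem~\ref{bognar}---for instance because the intended development derives it from Theorem~\ref{fremlin} rather than the other way round---one could instead attempt a direct argument: first use the hypothesis to force $A$ to be totally bounded, then cover $A$ from within a near-optimal cover of a connected set $C$ with $L^*(C)\leq\alpha$ and $e(A,C)$ small. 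But keeping the total diameter under control after thickening that cover is precisely the subtle step, and that is where the Steiner-tree description of $L^*$ developed in the body of the paper would presumably be brought to bear.)
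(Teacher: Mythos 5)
Your reduction of Theorem~\ref{fremlin} to Theorem~\ref{bognar} is, taken on its own, mathematically sound: the preordered set of finite families of probe balls is directed, $A\subseteq\li A_\lambda$ follows exactly as you argue, and monotonicity of $L^*$ together with Theorem~\ref{bognar} closes the loop. The problem is the direction of the deduction. In this paper Theorem~\ref{bognar} is itself obtained \emph{from} Theorem~\ref{fremlin} (the text states explicitly that Theorem~\ref{fremlin} easily implies Theorem~\ref{bognar}); the only version proved from scratch is Theorem~\ref{lsc}, via the Menger--Choquet length. So within the paper's development your argument is circular, and it becomes a genuine proof only if you import an independent proof of Theorem~\ref{bognar} --- essentially Bogn\'ar's original argument, which the paper describes as involved and as invoking Zorn's Lemma, i.e.\ far heavier than what is being asked for. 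You flag this possibility yourself in the final parenthesis, but the fallback you sketch there (force total boundedness, then cover $A$ from a near-optimal cover of a nearby connected $C$) stops exactly at the hard step, the one the whole paper is organised around.

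For the record, the intended proof is the short deduction from Theorem~\ref{lsc}. If $L^*(A)=\infty$, part (2) with $M=\alpha$ yields a finite collection $\CU$ of open sets hit by $A$ such that every connected set hitting $\CU$ has $L^*>\alpha$, contradicting the hypothesis; so $L^*(A)<\infty$. Then for each $\epsi>0$, part (1) yields a finite $\CU$ hit by $A$ such that $L^*(A)<L^*(B)+\epsi$ for every connected $B$ hitting $\CU$; the hypothesis supplies such a $B=C$ with $L^*(C)\leq\alpha$, whence $L^*(A)<\alpha+\epsi$, and letting $\epsi\to0$ gives $L^*(A)\leq\alpha$. All the substance lives in Theorem~\ref{lsc}, i.e.\ in Proposition~\ref{MClsc} together with the identity $L^*=L_{MC}$ on connected sets; your proposal defers that substance to a theorem the paper derives from the very statement you were asked to prove.
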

This theorem easily implies Theorem~\ref{bognar}.

We state a final version very close to Theorem~\ref{fremlin}, but in a form parallel to Theorem~\ref{frink}.
\begin{gtheorem}\label{lsc}
Let $A$ be a subset of a metric space $X$.
\begin{enumerate}
\item If $L^*(A)<\infty$ then for all $\epsi>0$ there exists a finite collection $\CU$ of open sets in $X$ hit by $A$ such that for any connected $B\subseteq X$ that hits $\CU$, $L^*(A)<L^*(B)+\epsi$.
\item If $L^*(A)=\infty$, then for all $M>0$ there exists a finite collection $\CU$ of open sets in $X$ hit by $A$ such that for any connected $B\subseteq X$ that hits $\CU$, $L^*(B)>M$.
\end{enumerate}
\end{gtheorem}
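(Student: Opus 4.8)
The plan is to deduce Theorem~\ref{lsc} from the Steiner-tree description of outer linear measure, that is, from Choquet's Theorem and the estimates used in its proof. For a finite set $F\subseteq X$ write $\SMT(F)$ for the length of a Steiner minimal tree of $F$ in $X$: the infimum of $\sum_{uv\in E(T)}d(u,v)$ over finite trees $T$ with $F\subseteq V(T)\subseteq X$. I would single out three facts. (a) If $C\subseteq X$ is connected and $F\subseteq C$, then $\SMT(F)\le L^*(C)$. (b) For every $A\subseteq X$, $L^*(A)\le\sup\setbuilder{\SMT(F)}{F\subseteq A\text{ finite}}$, with equality if $A$ is connected. (c) A perturbation estimate: if $F=\set{a_1,\dots,a_n}$, $F'=\set{b_1,\dots,b_n}$ and $d(a_i,b_i)\le r$ for all $i$, then $\SMT(F')\ge\SMT(F)-nr$, since adjoining to any tree spanning $F'$ the vertices $a_i$ as leaves joined to $b_i$ produces a tree spanning $F$ of length at most $nr$ longer. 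Facts (a) and (b) together are Choquet's Theorem, with (b) needed in a form valid also for disconnected $A$; fact (c) is elementary.

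For part~(1): if $L^*(A)<\epsi$, then $L^*(A)<L^*(B)+\epsi$ holds for every $B$, so take $\CU=\emptyset$. Otherwise $\epsi\le L^*(A)<\infty$; using (b), fix a finite $F=\set{a_1,\dots,a_n}\subseteq A$ with $\SMT(F)>L^*(A)-\epsi/2$, set $r=\epsi/(2n)$, and let $\CU=\setbuilder{B(a_i,r)}{1\le i\le n}$, which is hit by $A$ since $a_i\in A\cap B(a_i,r)$. If a connected $B$ hits $\CU$, pick $b_i\in B\cap B(a_i,r)$; then $\set{b_1,\dots,b_n}\subseteq B$, so (a) and (c) give
\[L^*(B)\ge\SMT(\set{b_1,\dots,b_n})\ge\SMT(F)-nr=\SMT(F)-\tfrac{\epsi}{2}>L^*(A)-\epsi,\]
which is what is wanted. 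Part~(2) has the same shape: given $M>0$, fact (b) with $L^*(A)=\infty$ yields a finite $F=\set{a_1,\dots,a_n}\subseteq A$ with $\SMT(F)>M+1$; with $r=1/n$ and $\CU=\setbuilder{B(a_i,r)}{1\le i\le n}$, any connected $B$ hitting $\CU$ has $L^*(B)\ge\SMT(\set{b_1,\dots,b_n})\ge\SMT(F)-nr>M$.

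I expect the real obstacle to lie entirely in the two inputs from Choquet's Theorem: extracting a spanning tree of controlled length for a finite subset of a connected set from an economical $\delta$-cover (for (a)), and the matching upper bound $L^*\le\sup_F\SMT$ (for (b)). The one subtlety specific to Theorem~\ref{lsc} is that (b) is applied to an arbitrary, not necessarily connected, $A$, so that half of the proof of Choquet's Theorem must be arranged so as not to use connectedness. With (a)--(c) in hand the argument above is routine, the only care being the degenerate case $L^*(A)<\epsi$ and choosing $r$ so small that the perturbation loss $nr$ stays below $\epsi/2$ (respectively below $1$). Finally, I would note that, up to these trivialities, Theorem~\ref{lsc} is just the contrapositive of Theorem~\ref{fremlin}, and so follows at once once that theorem has been proved.
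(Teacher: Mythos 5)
Your proposal is correct and is essentially the paper's own argument: facts (a) and (b) are the two halves of Theorem~\ref{maintheorem} (with (b) indeed stated there for arbitrary, not necessarily connected, $A$), fact (c) is Lemma~\ref{steiner2}, and your ball construction with radius $r=\epsi/(2n)$ is exactly the proof of Proposition~\ref{MClsc}, merely with $L_{MC}$ inlined so that the perturbation estimate is applied directly to $L^*$. The only (harmless) difference is that the paper routes the conclusion through the lower semicontinuity of $L_{MC}$ and notes that Theorem~\ref{fremlin} is a consequence of Theorem~\ref{lsc} rather than the other way around.
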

This theorem easily implies both Theorems~\ref{frink} and \ref{fremlin}.

In the next section we give a self-contained proof of Theorem~\ref{lsc}.
This is done in two steps.
First, we introduce the Menger--Choquet length of an arbitrary subset of a metric space and present a very simple proof of its semicontinuity formulated in a manner analogous to Theorem~\ref{lsc} (Proposition~\ref{MClsc}).
Then we state our main result (Theorem~\ref{maintheorem}), that the outer linear measure of a connected subset of a metric space $X$ equals its Menger--Choquet length.
This result generalises an old theorem of Choquet \cite{Choquet1938}.
Its proof is in Section~\ref{proof}.
Up to here everything is self-contained (using some elementary notions from graph theory).
In particular, everything is done without specifying exactly what we mean by lower semicontinuity.
In Section~\ref{topology} we discuss the two topological senses in which the above versions of Go\l\k{a}b's Theorem assert the lower semicontinuity of $L^*$ on connected sets or on continua.
Section~\ref{history} contains a discussion of the history and applications of Go\l\k{a}b's Theorem.

\section{Menger--Choquet length and Steiner trees}\label{tree}
It is clear that the outer linear measure of a set $A$, as defined in the previous section, is independent of the metric space of which $A$ is a subset.
The same will not be true for what we will call the Menger--Choquet length, introduced next.

As usual, we consider a \emph{graph} on a finite non-empty set of \emph{vertices} $V$ to be a pair $G=(V,E)$ where $E\subseteq\binom{V}{2}$, the set of \emph{edges} of $G$, is a set of unordered pairs of vertices.
The \emph{degree} of a vertex $x$ of $G$ is 
$\deg_G(x) = \card{\setbuilder{e\in E}{x\in e}}$.
A \emph{path} with \emph{endpoints} $a$ and $b$ is a subgraph of $G$ with distinct vertices $v_1,\dots,v_n$, $n\geq 1$, where $a=v_1$, $b=v_n$, and edges $\set{v_i,v_{i+1}}\in E$ for $i=1,\dots,n-1$.
A graph $G$ is \emph{connected} if for any $a,b\in V$, $G$ contains a path with endpoints $a$ and $b$, and a \emph{tree} if there is a unique path between any two vertices.
A \emph{cycle} is a graph with distinct vertices $v_1,\dots,v_n$, $n\geq 3$, and edges $\set{v_i,v_{i+1}}$ for $i=1,\dots,n-1$ as well as $\set{v_1,v_n}$.
If the vertex set $V$ of a graph $G=(V,E)$ is contained in the metric space $(X,d)$, then the \emph{length} of $G$ is defined to be $\ell(G)=\sum_{\set{a,b}\in E}d(a,b)$.
Our graphs are not geometric in any sense: We do not require $a$ and $b$ to be joined by an arc in $X$ if $\set{a,b}\in E$.

For any finite non-empty set $P\subseteq X$, define
\newlength{\myl}
\settowidth{\myl}{$G=(V,E)$ is a connected graph}
\[\SMT(P)=\inf\setbuilder{\ell(G)}{\text{$G=(V,E)$ is connected with $P\subseteq V\subseteq X$}}.\]
For example, if $X$ is the Euclidean plane and $P$ the vertex set of an equilateral triangle of side length $1$, then $\SMT(P)=\sqrt{3}$, with a shortest graph connecting the vertices to the centroid of the triangle.
Since any connected $G=(V,E)$ contains a tree $T=(V,E')$ on the same vertex set (a \emph{spanning tree} of $G$), we may restrict the graphs in the definition of $\SMT(P)$ to be trees.
Trees with a vertex set that contains $P$ are called \emph{Steiner trees} on $P$.
The vertices in $V\setminus P$ (if any) are the \emph{Steiner points} of the Steiner tree.
(If the infimum in the definition of $\SMT(P)$ is attained, any tree $T$ with vertex set containing $P$ such that $\SMT(P)=\ell(T)$ is called a \emph{Steiner minimal tree} of $P$.)
Without loss of generality, we may restrict the Steiner trees in the definition of $\SMT(P)$ to the \emph{proper Steiner trees} on $P$, that is, Steiner trees on $P$ such that all Steiner points have degree at least $2$.
(We could even restrict the Steiner trees to those in which all Steiner points have degree at least $3$, but it will be important later to allow Steiner points of degree $2$.)
As observed by Gustave Choquet \cite{Choquet1938} in 1938, it is immediate that if $P$ is contained in the finite set $Q$, then $\SMT(P)\leq\SMT(Q)$.
For any $A\subseteq X$ we define its \emph{Menger--Choquet length} to be
\[ L_{MC}(A) = \sup\setbuilder{\SMT(P)}{\text{$P$ is a finite non-empty subset of $A$}}.\]
This defines a monotone function $L_{MC}\colon\CP(X)\to[0,\infty]$.
Note that $L_{MC}(A)=0$ if and only if $\card{A}\leq 1$.
Before Choquet, in 1931 Karl Menger introduced this notion for arcs in metric spaces \cite[pp.~741--742]{Menger1931} and asked whether it coincides with arc length.
Shortly after, Yukio Mimura \cite{Mimura1933} gave a proof for arcs in Euclidean space.
Choquet independently introduced $L_{MC}$ in \cite{Choquet1938} for closed subsets of Euclidean space, and announced the following result.
Unfortunately, he did not include a proof.
(See Section~\ref{history} for further historical remarks.)
\begin{choquettheorem}\label{choquet}
For any continuum $A$ in Euclidean space, \[L_{MC}(A)=L^*(A).\]
\end{choquettheorem}
In Theorem~\ref{maintheorem} below we state a more general version of Choquet's Theorem.
We first introduce a related functional, due to Menger.
Note that $L_{MC}(A)$ is not a function of the metric space $A$ on its own, but depends on the metric space $X$ in which $A$ is contained.
We can attempt to fix this by restricting the Steiner points to be in $A$.
Thus, for any finite $P\subseteq X$ and any $A\subseteq X$ define
\settowidth{\myl}{$G=(V,E)$ is a connected graph}
\[\SMT_A(P)=\inf\setbuilder{\ell(G)}{\text{$G=(V,E)$ is connected with $P\subseteq V$ and $V\setminus P\subseteq A$}}.\]
As before, we may restrict the graphs in the definition of $\SMT_A(P)$ to be Steiner trees on $P$.
Also, $\SMT_A(P)\leq\SMT_B(Q)$ whenever $P\subseteq Q$ and $A\supseteq B$.
For any $A\subseteq X$, define its \emph{intrinsic Menger length} to be
\[ L_{IM}(A) = \sup\setbuilder{\SMT_A(P)}{\text{$P$ is a finite subset of $A$}}.\]
Menger \cite[pp.~741--742]{Menger1931} introduced $L_{IM}$ for arcs in metric spaces.
Clearly, $L_{MC}(A)\leq L_{IM}(A)$ for any $A\subseteq X$.
Like $L_{MC}$, $L_{IM}$ is defined for arbitrary subsets of $X$.
For instance, if $A$ is the vertex set of a square of side length $1$, then $L_{IM}(A)=3$, while if $B=A\cup\set{c}$, where $c$ is the centre of the square, then $L_{IM}(B)=2\sqrt{2}$ (Fig.~\ref{frinkexample}).
This shows that, unlike $L_{MC}$, $L_{IM}$ is not monotone, hence not lower semicontinuous in the sense of Proposition~\ref{MClsc}.
(See Section~\ref{topology} for a detailed discussion of its continuity properties.)
Nevertheless, it follows from Choquet's Theorem that if $A$ is a continuum, then $L_{MC}(A)$ is in fact independent of the ambient space $X$.
The following generalisation of Choquet's Theorem is the main result of this paper.
\begin{theorem}\label{maintheorem}
For any subset $A$ of a metric space $X$, $L^*(A)\leq L_{MC}(A)\leq L_{IM}(A)$.
If $A$ is furthermore connected, then $L^*(A)=L_{MC}(A)=L_{IM}(A)$.
\end{theorem}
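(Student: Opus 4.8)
The plan is to split the theorem into two inequalities and chain them. Since restricting the Steiner points of a connecting tree to lie in $A$ can only enlarge the infimum defining $\SMT_A(P)$ over the one defining $\SMT(P)$, the inequality $L_{MC}(A)\le L_{IM}(A)$ is immediate. It remains to prove (I) $L^*(A)\le L_{MC}(A)$ for every $A\subseteq X$, and (II) $L_{IM}(A)\le L^*(A)$ when $A$ is connected; for connected $A$ these combine with $L_{MC}\le L_{IM}$ to give $L^*(A)\le L_{MC}(A)\le L_{IM}(A)\le L^*(A)$, hence equality throughout. Inequality (I) is in essence the content of Choquet's Theorem, and I expect it to be the crux; connectedness enters both (I) and (II).

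For (I) (stated for connected $A$, which is all that the equality needs) I would argue as follows. We may assume $L_{MC}(A)<\infty$; then $A$ is totally bounded, since a $\delta$-separated subset $P\subseteq A$ with $|P|=m$ satisfies $\SMT(P)\ge\tfrac12\MST(P)\ge\tfrac12(m-1)\delta$ (shortcut an Euler tour of the doubled Steiner tree to a spanning tree of $P$), so $m\le1+2L_{MC}(A)/\delta$. Fix a target scale $D>0$ and $\varepsilon>0$, and choose $\delta$ very small relative to $D$. Let $F$ be a maximal $\delta$-separated (hence $\delta$-dense) finite subset of $A$, and let $T$ be a Steiner tree on $F$ with all terminal-free branches pruned and $\ell(T)\le\SMT(F)+\varepsilon\le L_{MC}(A)+\varepsilon$. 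The key structural step is a re-optimisation using connectedness: since $F$ is $\delta$-dense in the connected set $A$, every partition of $F$ into two non-empty parts contains two points, one in each part, at distance $<2\delta$; hence any edge of $T$ of length $\ge2\delta$ may be replaced by such a short edge between two terminals without increasing $\ell(T)$, and iterating, we may assume every edge of $T$ has length $<2\delta$. (This is precisely the reason one must allow Steiner points of degree $2$: they appear as the subdivision vertices of such replacements.) Then I would cover $A$ not by balls but by \emph{metric ellipses} $\{x:d(x,v)+d(x,w)\le d(v,w)+t\}$, which have diameter at most $d(v,w)+t$ rather than twice a radius. Partition $T$ greedily into subtrees, each of total length at most $D$ (absorbing short side-branches so that no subtree is wastefully small), and to each subtree attach the ellipse with foci a $d_T$-diametral pair of its vertices and $t$ just large enough that the ellipse swallows the $\delta$-neighbourhood of the subtree's vertices; because $T$ has only short edges, this can be done with ellipse diameter at most (subtree length)$\,+\,O(\delta)$. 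Since $F$ is $\delta$-dense, these ellipses cover $A$, and their diameters sum to at most $\ell(T)+O(\delta\cdot\ell(T)/D)\le L_{MC}(A)+\varepsilon+o(1)$ as $\delta/D\to0$. Hence $L^*_D(A)\le L_{MC}(A)+\varepsilon$ for every $D>0$, and letting $\varepsilon\to0$ then $D\to0$, $L^*(A)=\sup_D L^*_D(A)\le L_{MC}(A)$. For general $A$ the same construction is applied to each of the short-edged subtrees into which a pruned Steiner tree of a fine net of $A$ splits at its long edges; the long edges are simply discarded.

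For (II), assume $L^*(A)=M<\infty$ and fix a finite $P\subseteq A$ and $\varepsilon>0$; it suffices to build a connected graph $G$ with $P\subseteq V(G)$, $V(G)\setminus P\subseteq A$ and $\ell(G)\le M+\varepsilon$, and then take the supremum over $P$. Take a cover $\{U_i\}$ of $A$ with $\sum_i\diam(U_i)<M+\tfrac12\varepsilon$, discard those missing $A$, and pass to the relatively open cover $\{W_i=U_i\cap A\}$, which still has $\sum_i\diam(W_i)<M+\tfrac12\varepsilon$. The nerve graph on the indices ($i\sim j$ iff $W_i\cap W_j\ne\emptyset$) is connected: otherwise the unions of the two sides would disconnect $A$. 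Restricting to a finite connected subgraph of the nerve whose vertices include every $i$ with $W_i\cap P\ne\emptyset$, one reconstructs $G$ inside $A$ by choosing a point $r_{ij}\in W_i\cap W_j\subseteq A$ for each nerve edge $\{i,j\}$, joining the $r_{ij}$ sharing a common $W_i$ by edges of length $\le\diam(W_i)$, and attaching each $p\in P$ to some $r_{ij}\in W_{i(p)}$. The delicate point, exactly as in (I), is the accounting: a naive realisation charges some $\diam(W_i)$ more than once, so one first thins the cover --- merging the small sets clustered around a high-degree $W_i$, whose union has diameter $O(\delta)$, into a single set --- and organises the reconstruction along paths, so that each $\diam(W_i)$ is charged essentially once; letting $\delta\to0$ then absorbs the $O(|P|\delta)$ contributed by the edges incident to $P$.

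The step I expect to be hardest is making the covering in (I) lose nothing: the crude bounds $\SMT\ge\tfrac12\MST$ and ``a $\delta$-net of $A$ has on the order of $L_{MC}(A)/\delta$ points'' give only $L^*(A)\le4\,L_{MC}(A)$, and the constant is brought down to $1$ only by combining (a) the short-edge re-optimisation of $T$ (where connectedness is used), (b) metric ellipses in place of balls (removing the diameter-versus-radius factor $2$), and (c) working at a fixed coarse scale $D$ while sending the net scale $\delta\to0$ first (so that the order-$L_{MC}(A)/\delta$ pieces do not each leak a fixed amount). Carrying out the greedy decomposition of $T$ into efficiently ellipse-coverable subtrees with total diameter $\ell(T)(1+o(1))$, uniformly in the growing number of leaves of $T$, is the technical heart of the proof.
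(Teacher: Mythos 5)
Your architecture (chaining $L^*\le L_{MC}\le L_{IM}\le L^*$) is the paper's, and your part (II) is essentially the paper's argument: the nerve of a fine relatively open cover is connected by connectedness of $A$, a \emph{minimal} tree in the nerve connecting the sets that meet $P$ is realised inside $A$ by picking points in pairwise intersections, and the accounting works out. The paper makes your ``each $\diam(W_i)$ is charged essentially once'' precise not by merging sets but by the clean count $\sum_i(\card{V_i}-2)\le\sum_i(\deg(i)-2)+\card{P}=\card{P}-2$ for a minimal tree, so the total overcharge is at most $(\card{P}-2)\delta$; you would do well to replace the thinning/merging step by this.

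The genuine gap is in part (I), exactly where you flag ``the technical heart''. Your recipe---one metric ellipse with diametral foci per subtree of length at most $D$---provably does not achieve diameter $\ell(S)+O(\delta)$ on branching subtrees: for a Y-shaped subtree $S$ with three branches of length $D/3$, the diametral pair $(v,w)$ has $d_S(v,w)=2D/3$ while the third tip $u$ can satisfy $d(u,v)+d(u,w)=4D/3$, so the ellipse containing $u$ has guaranteed diameter only $\le 4D/3$, a constant-factor loss per piece that does not vanish when summed over $\Theta(\ell(T)/D)$ pieces. Moreover, the Steiner tree on a $\delta$-net has $\Theta(1/\delta)$ terminals, hence potentially $\Theta(1/\delta)$ branch points, so a decomposition into path-like pieces has $\Omega(1/\delta)$ pieces, each leaking $\Omega(\delta)$ from the fattening needed to cover $A$ rather than just the net---an $\Omega(1)$ total loss hidden in your $O(\delta\,\ell(T)/D)$ error term. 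There are two ways out, neither in your sketch. The paper's: first fix a finite $P\subseteq A$ with $\SMT(P)>L_{MC}(A)-\delta/4$, \emph{then} extend it to an $\epsi$-net $P'$ with $\epsi\le\min\{\delta^2,\delta/\card{P}\}$; since $\SMT(P')\le L_{MC}(A)<\SMT(P)+\delta/4$, the subtree spanning $P$ inside a near-optimal tree on $P'$ has at most $2\card{P}-3$ maximal chains (independent of $\delta$) and carries all but $\delta/2$ of the length, and the leftover components are covered wastefully at total cost $<O(\delta)$. The alternative, closer to your plan: drop the ellipses entirely, since for \emph{any} connected subtree $S$ the triangle inequality along tree-paths gives $\diam(V(S))\le\ell(S)$, so the $\delta$-neighbourhood of $V(S)$ already has diameter $\le\ell(S)+2\delta$; what you then need, and do not supply, is a tree-partition lemma producing $O(\ell(T)/D)$ connected pieces of length $\le D$ covering $V(T)$ (this is where short edges, hence your connectedness-based re-optimisation, would enter). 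Note finally that the paper's route proves $L^*(A)\le L_{MC}(A)$ for \emph{arbitrary} $A$ with no connectedness and no edge re-optimisation, whereas your one-sentence reduction of the general case to the connected one inherits the same unproved decomposition step.
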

The proof, which uses nothing more than elementary properties of graphs and metric spaces, is in the next section.
As a corollary we obtain Theorem~\ref{lsc} by showing that $L_{MC}$ is lower semicontinuous on all subsets of $X$ in Lemma~\ref{steiner2} and Proposition~\ref{MClsc}.
\begin{lemma}\label{steiner2}
For all finite non-empty subsets $P$ and $Q$ of $X$, $\SMT(P)\leq\SMT(Q)+\card{P}e(P,Q)$.
\end{lemma}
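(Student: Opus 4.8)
The plan is to take a nearly optimal connected graph on $Q$ and cheaply attach the points of $P$ to it. First I would dispose of the trivial case $\SMT(Q)=\infty$, and note that $e(P,Q)<\infty$ because $Q\neq\emptyset$ and $P$ is finite (so each $d(p,Q)$ is finite, indeed a minimum over the finite set $Q$). Then fix $\epsi>0$ and choose a connected graph $G=(V,E)$ with $Q\subseteq V\subseteq X$ and $\ell(G)<\SMT(Q)+\epsi$; by the remark preceding the lemma we may even take $G$ to be a Steiner tree on $Q$, though this is not needed.

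Next, for each $p\in P\setminus V$ I would pick $q_p\in Q$ with $d(p,q_p)=d(p,Q)\leq e(P,Q)$, which is possible since $Q$ is finite and non-empty. Set $G'=\bigl(V\cup P,\ E\cup\setbuilder{\set{p,q_p}}{p\in P\setminus V}\bigr)$. A brief bookkeeping check confirms that the added pairs are legitimate edges and are pairwise distinct and distinct from the edges in $E$: each such pair contains the vertex $p\notin V$, so in particular $q_p\neq p$, and if $\set{p,q_p}=\set{p',q_{p'}}$ with $p\neq p'$ then $p=q_{p'}\in Q\subseteq V$, a contradiction. Since $G$ is connected and every new vertex $p$ is joined in $G'$ to $q_p\in Q\subseteq V$, the graph $G'$ is connected, and clearly $P\subseteq V\cup P$, so $G'$ is admissible in the definition of $\SMT(P)$.

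Finally I would estimate the length: since $P\setminus V$ has at most $\card{P}$ elements,
\[
\ell(G')=\ell(G)+\sum_{p\in P\setminus V}d(p,q_p)\ \leq\ \ell(G)+\card{P}\,e(P,Q)\ <\ \SMT(Q)+\card{P}\,e(P,Q)+\epsi,
\]
hence $\SMT(P)\leq\SMT(Q)+\card{P}\,e(P,Q)+\epsi$; letting $\epsi\to 0$ gives the claim. There is no substantial obstacle here: the only points needing care are the trivial infinite cases, the verification that the attachment edges are genuine new edges (covering the subcases $p\in Q$, $p$ already a Steiner point of $G$, or $d(p,Q)=0$), and the observation that the $\epsi$-slack is needed only in the choice of $G$, the inner distance $d(p,Q)$ being attained because $Q$ is finite.
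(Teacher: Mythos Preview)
Your proof is correct and follows the same idea as the paper's: take a near-optimal connected graph on $Q$ and graft on the points of $P$ by short edges to their nearest neighbours in $Q$. The only cosmetic differences are that the paper takes an arbitrary Steiner tree $T_Q$ on $Q$ and passes to the infimum at the end (rather than fixing $\epsi>0$ and letting $\epsi\to 0$), and that the paper attaches every $p\in P\setminus Q$ rather than every $p\in P\setminus V$; your choice is in fact slightly cleaner, since it avoids any worry about a point of $P$ coinciding with a Steiner vertex of $G$. Your remark that $\SMT(Q)$ might be infinite is harmless but vacuous, since any spanning tree on the finite set $Q$ already witnesses $\SMT(Q)<\infty$.
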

\begin{proof}
Let $T_Q=(V_Q,E_Q)$ be a Steiner tree on $Q$.
For each $p\in P$, let $v(p)$ be a point in $Q$ closest to $p$.
Let $V=P\cup Q$ and $E=E_Q\cup\setbuilder{\set{p,v(p)}}{p\in P\setminus Q}$.
Then $T=(V,E)$ is a Steiner tree on $P$ of length
\[\ell(T) = \ell(T_Q)+\sum_{p\in P}d(p,v(p)) \leq \ell(T_Q)+\card{P}e(P,Q).\]
Thus, $\SMT(P)\leq\ell(T_Q)+\card{P}e(P,Q)$ for all Steiner trees $T_Q$ on $Q$.
It follows that $\SMT(P)\leq\SMT(Q)+\card{P}e(P,Q)$.
\end{proof}
\begin{proposition}\label{MClsc}
Let $A$ be a subset of a metric space $X$.
\begin{enumerate}
\item If $L_{MC}(A)<\infty$ then for each $\epsi>0$ there exists a finite collection~$\CU$ of open sets in $X$ hit by $A$ such that for any $B\subseteq X$ that hits~$\CU$, $L_{MC}(A)<L_{MC}(B)+\epsi$.
\item If $L_{MC}(A)=\infty$, then for each $M>0$ there exists a finite collection~$\CU$ of open sets in $X$ hit by $A$ such that for any $B\subseteq X$ that hits~$\CU$, $L_{MC}(B)>M$.
\end{enumerate}
\end{proposition}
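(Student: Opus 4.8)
The plan is to combine Lemma~\ref{steiner2} with a carefully chosen finite collection of balls centred at a near-optimal finite set. In both cases I would start by exploiting the definition of $L_{MC}(A)$ as a supremum of $\SMT(P)$ over finite non-empty $P\subseteq A$: in case~(1) pick such a $P=\set{p_1,\dots,p_k}$ with $\SMT(P)>L_{MC}(A)-\epsi/2$, and in case~(2) pick $P=\set{p_1,\dots,p_k}$ with $\SMT(P)>M+1$.

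Next, for a radius $\delta>0$ to be fixed at the end, set $\CU=\set{B(p_1,\delta),\dots,B(p_k,\delta)}$. Since $p_i\in A\cap B(p_i,\delta)$, the collection $\CU$ is hit by $A$. Now suppose $B\subseteq X$ hits $\CU$; choose $q_i\in B\cap B(p_i,\delta)$ for each $i$ and set $Q=\set{q_1,\dots,q_k}$, a finite non-empty subset of $B$. Then $d(p_i,Q)\leq d(p_i,q_i)<\delta$ for every $i$, so $e(P,Q)<\delta$, and Lemma~\ref{steiner2} gives $\SMT(P)\leq\SMT(Q)+\card{P}\,e(P,Q)<L_{MC}(B)+k\delta$, the final inequality because $Q\subseteq B$ implies $\SMT(Q)\leq L_{MC}(B)$.

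It then remains to calibrate $\delta$. In case~(1), taking $\delta=\epsi/(2k)$ gives $L_{MC}(A)-\epsi/2<\SMT(P)<L_{MC}(B)+\epsi/2$, hence $L_{MC}(A)<L_{MC}(B)+\epsi$. In case~(2), taking $\delta=1/(k+1)$ gives $k\delta<1$, so $L_{MC}(B)>\SMT(P)-k\delta>(M+1)-1=M$. In each case the collection $\CU$ is determined by $A$ together with $\epsi$ (resp.\ $M$), as required.

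I do not expect a genuine obstacle here: essentially all the work has already been done in Lemma~\ref{steiner2}, and the remaining argument is just the translation of ``$B$ hits $\CU$'' into a bound on $e(P,Q)$. The one point worth emphasising is that hitting the balls $B(p_i,\delta)$ controls only the excess $e(P,Q)$ of $P$ over $Q$, and not the full Hausdorff distance $D(P,Q)$; this asymmetry is exactly what Lemma~\ref{steiner2} is tailored to exploit, and it is what makes the conclusion a one-sided (lower-semicontinuity) statement --- the set $B$ is permitted to be far larger than $A$, yet its Menger--Choquet length can still only be bounded from below. Monotonicity of $L_{MC}$ enters only through the trivial step $\SMT(Q)\leq L_{MC}(B)$.
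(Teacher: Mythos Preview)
Your proposal is correct and essentially identical to the paper's proof: both choose a near-optimal finite $P\subseteq A$, take $\CU$ to be the balls $B(p,\delta)$ with $\delta=\epsi/(2\card{P})$, and apply Lemma~\ref{steiner2} to a set $Q\subseteq B$ obtained by picking one point of $B$ in each ball. The only difference is that you spell out the second case explicitly (with $\SMT(P)>M+1$ and $\delta=1/(k+1)$), whereas the paper simply remarks that minimal adjustments suffice.
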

\begin{proof}
Suppose first that $L_{MC}(A)<\infty$.
Let $\epsi>0$.
Choose a finite $P\subseteq A$ such that $\SMT(P) > L_{MC}(A)-\epsi/2$.
Let $\delta=\epsi/(2\card{P})$.
Then $A$ trivially hits $\CU=\setbuilder{B(p,\delta)}{p\in P}$.
Let $B\subseteq X$ be an arbitrary set that hits $\CU$, and for each $p\in P$, choose $p'\in B\cap B(p,\delta)$.
Let $P'=\setbuilder{p'}{p\in P}$.
Then $e(P,P')<\delta$, and by Lemma~\ref{steiner2}, $\SMT(P)<\SMT(P')+\card{P}\delta\leq L_{MC}(B)+\epsi/2$.
It follows that $L_{MC}(A) < L_{MC}(B)+\epsi$.

The case where $L_{MC}(A)=\infty$ can be proved by making minimal adjustments to the above proof.
\end{proof}
As another corollary of Theorem~\ref{maintheorem} we obtain a theorem of Bogn\'ar \cite{Bognar1989} (cf.\ Fremlin \cite[4A]{Fremlin1992}).
\begin{corollary}[Bogn\'ar \cite{Bognar1989}]\label{bognartheorem}
For any connected subset $A$ of a metric space $X$, $L^*(A)=L^*(\overline{A})$.
\end{corollary}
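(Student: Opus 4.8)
The plan is to deduce the corollary from the Main Theorem (Theorem~\ref{maintheorem}) together with Lemma~\ref{steiner2}, the underlying point being that $\SMT$ is insensitive to small perturbations of a finite point set. Since $A\subseteq\overline{A}$, monotonicity of $L^*$ immediately gives $L^*(A)\leq L^*(\overline{A})$, so all the work is in the reverse inequality $L^*(\overline{A})\leq L^*(A)$.

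First I would reduce this to a statement about Menger--Choquet length. By the first part of Theorem~\ref{maintheorem}, applied to the set $\overline{A}$, we have $L^*(\overline{A})\leq L_{MC}(\overline{A})$; and since $A$ is connected, the second part of Theorem~\ref{maintheorem} gives $L_{MC}(A)=L^*(A)$. Hence it suffices to prove $L_{MC}(\overline{A})\leq L_{MC}(A)$ (which, together with monotonicity of $L_{MC}$, will in fact yield $L_{MC}(\overline{A})=L_{MC}(A)$). Note that this route does not require knowing in advance that $\overline{A}$ is connected.

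The crux is therefore the inequality $L_{MC}(\overline{A})\leq L_{MC}(A)$. To establish it I would take an arbitrary finite non-empty $P\subseteq\overline{A}$ and an arbitrary $\delta>0$, and use the density of $A$ in $\overline{A}$ to choose, for each $p\in P$, a point $p'\in A$ with $d(p,p')<\delta$. Put $P'=\setbuilder{p'}{p\in P}$, a finite non-empty subset of $A$. Then $e(P,P')\leq\delta$, so Lemma~\ref{steiner2} gives $\SMT(P)\leq\SMT(P')+\card{P}\,e(P,P')\leq\SMT(P')+\card{P}\delta\leq L_{MC}(A)+\card{P}\delta$. Letting $\delta\to 0$ yields $\SMT(P)\leq L_{MC}(A)$, and taking the supremum over all finite non-empty $P\subseteq\overline{A}$ gives $L_{MC}(\overline{A})\leq L_{MC}(A)$.

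Chaining the inequalities, $L^*(\overline{A})\leq L_{MC}(\overline{A})\leq L_{MC}(A)=L^*(A)\leq L^*(\overline{A})$, so equality holds throughout, and in particular $L^*(A)=L^*(\overline{A})$. I do not expect a serious obstacle here; the one mild point to watch is that $p\mapsto p'$ need not be injective, so $P'$ may have strictly fewer than $\card{P}$ elements, but this is harmless because $\SMT(Q)\leq L_{MC}(A)$ holds for every finite non-empty $Q\subseteq A$ regardless of its size.
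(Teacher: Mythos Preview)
Your proof is correct and matches the paper's second derivation of the corollary: the paper notes that it follows from Theorem~\ref{maintheorem} together with the observation that $L_{MC}(A)=L_{MC}(\overline{A})$ because $\SMT(\cdot)$ is continuous on finite subsets of $X$, which is exactly what you unpack via Lemma~\ref{steiner2}. The paper also mentions an alternative route, deducing the corollary directly from Theorem~\ref{bognar} applied to the constant sequence $A_n=A$ (whose lower limit is $\overline{A}$), but your argument is the one aligned with the Menger--Choquet machinery.
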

This corollary is immediate from Theorem~\ref{bognar} and the fact that the lower limit of a constant sequence $\net{A}$ is the closure $\closure{A}$.
It also follows from Theorem~\ref{maintheorem} by noting that, since $\SMT(\cdot)$ is a continuous function on the collection of finite subsets of $X$, $L_{MC}(A)=L_{MC}(\overline{A})$ for any subset $A$ of a metric space $X$.

We note in passing that if $A$ is connected and $L^*(A)<\infty$, then $A$ is $L^*$-measurable \cite[4I]{Fremlin1992}, a result which is beyond the scope of this paper.

We may ask whether the use of Steiner points is necessary.
In fact, before Menger introduced $L_{MC}$ and $L_{IM}$ for arcs in \cite{Menger1931, Menger1932}, he defined \cite[\S\ 6]{Menger1930} the length of a metric continuum $C$ in the following straightforward way:
\[ L_M(C) =\sup\setbuilder{\MST(P)}{\text{$P$ is a finite non-empty subset of $C$}},\]
where \[\MST(P)=\SMT_{\emptyset}(P)=\min\setbuilder{\ell(G)}{\text{$G=(P,E)$ is connected}}\] is the length of a minimal spanning tree on $P$.
Menger \cite{Menger1930} showed that if $C$ is an arc, $L_M(C)=L^*(C)$, and also that $L_M$ is lower semicontinuous on the collection of continua.
Unlike $L_{MC}(C)$, $L_M(C)$ depends only on $C$, and not on the metric space in which $C$ lies.
However, as pointed out by Frink \cite{Frink1936}, $L_M(C)$ in general does not necessarily equal $L^*(C)$.
To demonstrate this, Frink used Menger's own observation that $L_M$ is not monotone even if $C$ is a continuum (Fig.~\ref{frinkexample}).
\begin{figure}
\centering
\begin{tikzpicture}[thick, scale=1.5]
    \foreach \x in {-45,45,135,225} 
        {\draw[thin,gray] (0,0) -- (\x:1cm);}
    \draw (-45:1cm) \foreach \x in {45,135,225} {-- (\x:1cm)} -- cycle;
    \draw (-0.5,0) node{$1$};
    \draw (0,-1) node{$A$};
    \foreach \x in {-45,45,135,225} 
        {\fill (\x:1cm) circle (1.5pt);}
\begin{scope}[xshift=3cm]
    \draw[thin,gray] (-45:1cm) \foreach \x in {45,135,225} {-- (\x:1cm)} -- cycle;
    \foreach \x in {-45,45,135,225} 
        {\draw (0,0) -- (\x:1cm);}
    \draw (0,-1) node{$B$};
    \foreach \x in {-45,45,135,225} 
        {\fill (\x:1cm) circle (1.5pt);}
    \fill (0,0) circle (1.5pt) node[right=2mm] {$c$};
\end{scope}
\draw(1.5,0) node {$\subset$};
\draw(1.5,-1.5) node {$\SMT_A(A)=\MST(A)=3>\SMT_B(B)=\MST(B)=2\sqrt{2}$};
\end{tikzpicture}
\caption{Menger's example \cite[top of p.~476]{Menger1930} demonstrating that $P\mapsto \MST(P)$ and $P\mapsto\SMT_P(P)$ are not monotone}\label{frinkexample}
\end{figure}
Let $C$ be the union of the two diagonals of a unit square in the plane.
Then $L^*(C)=2\sqrt{2}$, but $L_M(C)=3$, as it is not too difficult to show that for any finite non-empty $P\subseteq C$, $\MST(P)\leq 3$ with equality if and only if $P=A$, the set consisting of the four vertices of the square.
(Note that $\SMT(A)=1+\sqrt{3} < 2\sqrt{2}=\MST(B)$, where $B=A\cup\set{c}$, with $c$ the centre of the square.)

Nevertheless, it follows from Lemma~\ref{moore} below that \[L_{MC}(A)\leq L_{IM}(A)\leq L_M(A)\leq 2L_{MC}(A)\] for any $A\subseteq X$, and therefore, if any one of these quantities is finite, all of them are.
There is a simple characterization of metric spaces $A$ for which any of these quantities is finite.
They are exactly the subsets of metric continua of finite outer linear measure.
\begin{theorem}\label{shortest1}
Any metric space $A$ with $L_{MC}(A)<\infty$ can be embedded into a metric continuum $C$ such that $L_{MC}(A)=L_{MC}(C)=L^*(C)$.
\end{theorem}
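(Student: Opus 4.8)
Write $\ell:=L_{MC}(A)$, where $\SMT$ is computed inside $A$ itself, so that $\ell=\sup\{\SMT(P):\emptyset\neq P\subseteq A\text{ finite}\}$. The plan is to realise $C$ as a Gromov--Hausdorff limit of metric spaces obtained by attaching geometric arcs to $A$ along the edges of near-optimal Steiner trees. First I claim $A$ is totally bounded: otherwise $A$ would contain an $\epsi$-separated sequence $a_1,a_2,\dots$ for some $\epsi>0$, and every Steiner tree $T$ on $\{a_1,\dots,a_n\}$ would satisfy $\ell(T)\ge\tfrac12\MST(\{a_1,\dots,a_n\})\ge\tfrac{\epsi}{2}(n-1)$ by the standard doubling bound (double $T$, take an Eulerian circuit, shortcut to a Hamiltonian path on $\{a_i\}$), forcing $\ell\ge\SMT(\{a_1,\dots,a_n\})\to\infty$. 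In particular $A$ is separable, so I can pick an increasing sequence of finite sets $P_1\subseteq P_2\subseteq\cdots\subseteq A$ with $\bigcup_nP_n$ dense in $A$, $\sup_{a\in A}d(a,P_n)\to0$, and $\SMT(P_n)\to\ell$ (let each $P_n$ contain both a finite $\tfrac1n$-net of $A$ and a finite set witnessing $\SMT(\cdot)>\ell-\tfrac1n$). For each $n$ I fix a Steiner tree $T_n$ on $P_n$ with all vertices in $A$ and $\ell(T_n)<\SMT(P_n)+\tfrac1n$, so that $\ell(T_n)\to\ell$.

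Next I build, for each $n$, a metric space $C_n$ by attaching to $A$, for every edge $\{u,v\}$ of $T_n$, a new arc of length $d(u,v)$ with endpoints $u,v$, and giving the union the gluing (length) metric. Since each attached arc is exactly as long as the distance between its endpoints, no shortcuts are created, so $A$ embeds isometrically in $C_n$; and the subset $\widehat T_n\subseteq C_n$ consisting of the $T_n$-vertices together with the attached arcs is a connected compact set with $L^*(\widehat T_n)\le\ell(T_n)$. Moreover every $a\in A$ lies within $d(a,P_n)$ of $\widehat T_n$ in $C_n$, so $D(C_n,\widehat T_n)\le\sup_{a\in A}d(a,P_n)\to0$.

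Since $A$ is totally bounded and each $T_n$ has length at most $\ell+1$, the spaces $C_n$ are uniformly bounded and uniformly totally bounded, so after passing to a subsequence they converge in the Gromov--Hausdorff sense to a compact space $C_0$, and I may realise this as Hausdorff convergence $C_n\to C_0$ inside a single totally bounded metric space $W\supseteq C_0$ (taken so that each $C_n$ lies close to $C_0$, not by naively gluing all the $C_n$ along $A$, which can fail to be totally bounded). As $D(C_n,\widehat T_n)\to0$, also $\widehat T_n\to C_0$ in $W$, so $C_0$ is connected, being a Hausdorff limit of the connected sets $\widehat T_n$, and Theorem~\ref{golab}, applied in $W$, gives $L^*(C_0)\le\liminf_nL^*(\widehat T_n)\le\liminf_n\ell(T_n)=\ell$. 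Fixing a countable dense set $\{a_i\}$ of $A$ and passing to a further diagonal subsequence so that the images of each $a_i$ converge in $W$, the limit points $a_i^\flat\in C_0$ satisfy $d(a_i^\flat,a_j^\flat)=\lim_nd_{C_n}(a_i,a_j)=d(a_i,a_j)$, so the closure of $\{a_i^\flat\}$ in the complete space $C_0$ is an isometric copy of the completion of $A$, hence of $A$. Thus $C_0$ is a metric continuum, $L^*(C_0)\le\ell$, and $A$ embeds isometrically into $C_0$.

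To finish: if $L^*(C_0)<\ell$, let $C$ be $C_0$ with an arc of length $\ell-L^*(C_0)$ attached at one point, and otherwise let $C=C_0$ (when $\ell=0$, $A$ has at most one point and any one-point $C$ works); in every case $C$ is a metric continuum containing $A$ with $L^*(C)=\ell$, and since $C$ is connected Theorem~\ref{maintheorem} gives $L_{MC}(C)=L^*(C)=\ell=L_{MC}(A)$. The main difficulty is the reconciliation in the third paragraph: the realisations $\widehat T_n$ have controlled length but need not contain $A$ isometrically (a minimal Steiner tree may distort the metric on $P_n$), whereas $C_n=A\cup\widehat T_n$ contains $A$ isometrically but may be disconnected when $A$ is totally disconnected; the key is that $C_n$ is Hausdorff-close to the connected set $\widehat T_n$, so their common Gromov--Hausdorff limit is simultaneously connected, of outer linear measure at most $\ell$, and a carrier of $A$ — provided the limit is taken inside a genuinely totally bounded ambient space.
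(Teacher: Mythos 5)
The paper does not actually include a proof of Theorem~\ref{shortest1}; it only records that the proof uses Gromov's compactness theorem for Gromov--Hausdorff convergence, and your argument follows exactly that indicated route (attach arcs along near-optimal intrinsic Steiner trees, extract a Gromov--Hausdorff limit realised as a Hausdorff limit in a common totally bounded space, and apply Theorem~\ref{golab} to the connected approximants $\widehat T_n$), so it is essentially the same approach and is correct. The only points left implicit are routine: the uniform total boundedness of the glued spaces must also account for the attached arcs (only finitely many have length $>\epsi$, and the rest lie within $\epsi$ of $A$), and the final padding step uses the additivity $L^*(C_0\cup\gamma)=L^*(C_0)+L^*(\gamma)$ for an arc $\gamma$ meeting $C_0$ in a single point.
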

The proof, although outside the scope of this paper, uses Gromov's compactness theorem for Gromov--Hausdorff convergence \cite{Ducret2008, PaoliniStepanov2013}.
We also mention the following result that can be proved in a standard way with the help of the Blaschke selection theorem for Hausdorff convergence.
As mentioned before, a metric space $X$ is \emph{convex} if any two points $a,b\in X$ are joined by an arc of length $d(a,b)$.
A metric space is called \emph{proper} if all closed and bounded sets are compact.
\begin{theorem}\label{shortest2}
Let $X$ be a convex, proper metric space.
For any $A\subseteq X$, if $L_{MC}(A)<\infty$ then $A$ is contained in a continuum $C\subseteq X$ such that $L_{MC}(A)=L_{MC}(C)=L^*(C)$.
\end{theorem}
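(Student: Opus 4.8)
The plan is to exhaust $A$ from within by an increasing sequence of finite sets $P_n$, turn near-optimal Steiner trees on the $P_n$ into genuine continua $\Gamma_n\subseteq X$ by replacing each edge with an arc of minimal length (this is where convexity enters), extract a Hausdorff limit $C$ of a subsequence of the $\Gamma_n$ using properness, and then identify $L^*(C)$ via Theorem~\ref{maintheorem} and Go\l\k{a}b's Theorem. We may assume $\card A\ge 1$, the case $A=\emptyset$ being trivial (take $C$ to be any singleton).

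First I would observe that $L_{MC}(A)<\infty$ forces $A$ to be totally bounded. By the inequality $L_M(A)\le 2L_{MC}(A)$ recorded above, every spanning tree on a finite set $P\subseteq A$ has length at most $2L_{MC}(A)$; since a spanning tree on $n$ points that are pairwise at distance at least $\epsi$ has length at least $(n-1)\epsi$, no subset of $A$ can contain more than $1+2L_{MC}(A)/\epsi$ points that are $\epsi$-separated. Hence $A$ is bounded and separable; fix a countable dense subset $\set{a_1,a_2,\dots}$ of $A$. Now choose finite sets $Q_n\subseteq A$ with $\SMT(Q_n)\to L_{MC}(A)$ and put $P_n=\set{a_1,\dots,a_n}\cup Q_1\cup\dots\cup Q_n$. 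Then $(P_n)$ is an increasing sequence of finite subsets of $A$ whose union is dense in $A$, and $\SMT(Q_n)\le\SMT(P_n)\le L_{MC}(A)$ forces $\SMT(P_n)\to L_{MC}(A)$.

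For each $n$ I would pick a Steiner tree $T_n$ on $P_n$ with $\ell(T_n)<\SMT(P_n)+1/n$ and, using the convexity of $X$, replace every edge $\set{a,b}$ of $T_n$ by an arc from $a$ to $b$ of length $d(a,b)$; let $\Gamma_n\subseteq X$ be the union of these (finitely many) arcs. Then $\Gamma_n$ is a continuum containing $P_n$, and since $L^*$ is finitely subadditive and an arc of length $\ell$ has $L^*$-measure at most $\ell$, we get $L^*(\Gamma_n)\le\ell(T_n)$. Every vertex of $T_n$ is joined to some point of $P_n\subseteq A$ by a path in $T_n$, hence lies within distance $\ell(T_n)\le L_{MC}(A)+1$ of $A$; together with the boundedness of $A$ this places all the $\Gamma_n$ in one bounded subset of $X$, whose closure $K$ is compact because $X$ is proper. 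By the Blaschke selection theorem some subsequence $\Gamma_{n_k}$ converges in the Hausdorff metric to a compact set $C\subseteq K$, and since a Hausdorff limit of continua is a continuum, $C$ is a continuum. Moreover $A\subseteq C$: given $a\in A$ and $\epsi>0$ there is $a_i$ with $d(a,a_i)<\epsi$, and $a_i\in P_j\subseteq\Gamma_j$ for all $j\ge i$, so $d(a,\Gamma_{n_k})<\epsi$ for all large $k$; as the distance to a set is $1$-Lipschitz in the Hausdorff metric, $d(a,C)=\lim_k d(a,\Gamma_{n_k})=0$, i.e.\ $a\in C$.

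Finally I would chain the inequalities. Monotonicity of $L_{MC}$ together with $A\subseteq C$ gives $L_{MC}(A)\le L_{MC}(C)$; since $C$ is connected, Theorem~\ref{maintheorem} gives $L_{MC}(C)=L^*(C)$; and since $e(C,\Gamma_{n_k})\to 0$, Theorem~\ref{golab} gives $L^*(C)\le\liminf_k L^*(\Gamma_{n_k})\le\liminf_k\ell(T_{n_k})=L_{MC}(A)$. Combining these, $L_{MC}(A)=L_{MC}(C)=L^*(C)$, as claimed. The step I expect to need the most care is the uniform boundedness of the $\Gamma_n$ — equivalently, confining the Steiner points of all the $T_n$ to one bounded region — since this is precisely what makes the Blaschke selection theorem applicable; verifying $L^*(\Gamma_n)\le\ell(T_n)$ and that the Hausdorff limit is a continuum should be routine.
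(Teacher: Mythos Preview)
Your argument is correct and is precisely the ``standard'' route the paper alludes to: it does not give a proof of this theorem, only the remark that it ``can be proved in a standard way with the help of the Blaschke selection theorem for Hausdorff convergence,'' and your proposal carries out exactly that plan, invoking Theorem~\ref{maintheorem} and Go\l\k{a}b's Theorem to close the chain of inequalities. The one point you flag as delicate --- confining all the $\Gamma_n$ to a single bounded set --- is indeed straightforward: every point on one of the geodesic arcs making up $\Gamma_n$ lies within the length of that arc from a vertex of $T_n$, and every vertex lies within $\ell(T_n)\leq L_{MC}(A)+1$ of the bounded set $A$, so all $\Gamma_n$ sit in a fixed closed ball, compact by properness.
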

In Section~\ref{history} we discuss more general results on the existence of connected sets of minimum length containing a given set.
\section{Proof of Theorem~\ref{maintheorem}}\label{proof}
Recall that a Steiner tree is called proper if the degree of each Steiner point is at least $2$.
As mentioned before, it is very important for our purposes to allow Steiner points of degree $2$.
However, as is well known, in any Steiner tree on a set $P$, there are at most $\card{P}-2$ Steiner points of degree at least $3$.
This follows from the fact that a tree on $\card{V}$ vertices has $\card{V}-1$ edges and degree counting.
A \emph{chain} in a Steiner tree on $P$ is a path in the tree such that all vertices of the path, except possibly the end-vertices, are Steiner points of degree $2$.
A chain in a Steiner tree is called \emph{maximal} if it is not properly contained in a larger chain.
Let $T$ be a proper Steiner tree.
Each vertex of $T$ is contained in some maximal chain of $T$, and each edge of $T$ is contained in a unique maximal chain.
Thus, any two maximal chains are edge-disjoint.
Also, if two maximal chains have a common vertex, this vertex must be a common endpoint that is not a Steiner point of degree $2$.
(See Fig.~\ref{chain}.)
\begin{figure}
\centering
\begin{tikzpicture}[scale=1.5]
\draw (0,0) -- (0.25,0.30) -- (0.5,0.55)  -- (0.78,0.72)  -- (1,1)  -- (1.3,0.87)  -- (1.7,0.9)  -- (2,1)  -- (2.30,0.75)  -- (2.60,0.5)  -- (2.80,0.25)  -- (3,0) ;
\draw (0,2)  -- (0.20,1.75)  -- (0.53,1.5)  -- (0.70,1.25)  -- (1,1);
\draw (2,1) -- (2.25,1.30)  -- (2.5,1.52)  -- (2.75,1.80)  -- (3,2) ;

\fill (0,0) circle (1.5pt);
\fill (0.25,0.30) circle (1pt);
\fill (0.5,0.55) circle (1pt);
\fill (0.78,0.72) circle (1pt);

\fill (1,1) circle (1pt);

\fill (1.3,0.87) circle (1pt);
\fill (1.7,0.9) circle (1pt);

\fill (2,1) circle (1pt);

\fill (2.30,0.75) circle (1pt);
\fill (2.60,0.5) circle (1pt);
\fill (2.80,0.25) circle (1pt);
\fill (3,0) circle (1.5pt);

\fill (0,2) circle (1.5pt);
\fill (0.20,1.75) circle (1pt);
\fill (0.53,1.5) circle (1pt);
\fill (0.70,1.25) circle (1pt);

\fill (2.25,1.30) circle (1pt);
\fill (2.5,1.52) circle (1pt);
\fill (2.75,1.80) circle (1pt);
\fill (3,2) circle (1.5pt);
%%%%%%%%%%%%%%%%%%%%%%%%%%%%%%%%%%%%%%%

\draw[thick, blue] (-0.2,0) .. controls (-0.3,0) and (0.3,0.5) .. node[above left] {$C_4$}
(0.8,1) .. controls (0.9,1.1) and (1.1,1.2) .. 
(1.15,1) .. controls (1.3,0.9) and (0.7,0.5) .. 
(0.2,0) .. controls (0.1,-0.1) and (-0.15,-0.25) .. (-0.2,0) -- cycle;

\draw[thick, blue] (-0.2,2) .. controls (-0.3,2) and (0.3,1.5) .. node[below left] {$C_1$}
(0.8,1) .. controls (0.9,0.9) and (1.1,0.8) .. 
(1.15,1) .. controls (1.3,1.1) and (0.7,1.5) .. 
(0.2,2) .. controls (0.1,2.1) and (-0.15,2.25) .. (-0.2,2) -- cycle;

\draw[thick, blue] (3.2,0) .. controls (3.3,0) and (2.9,0.5) .. node[above right] {$C_5$}
(2.2,1) .. controls (2.1,1.1) and (1.9,1.2) .. 
(1.85,1) .. controls (1.7,0.9) and (2.5,0.5) .. 
(2.8,0) .. controls (2.9,-0.1) and (3.15,-0.25) .. (3.2,0) -- cycle;

\draw[thick, blue] (3.2,2) .. controls (3.3,2) and (2.7,1.5) .. node[below right] {$C_3$}
(2.2,1) .. controls (2.1,0.9) and (1.9,0.8) .. 
(1.85,1) .. controls (1.7,1.1) and (2.3,1.5) .. 
(2.8,2) .. controls (2.9,2.1) and (3.15,2.25) .. (3.2,2) -- cycle;

\draw[thick, blue] (1,0.8) .. controls (0.8,0.8) and (0.8,1.1) ..
(1.05,1.16) .. controls (1.3,0.9) and (1.7,0.95) .. node[above] {$C_2$}
(2,1.16) .. controls (2.2,1.2) and (2.2,0.8) ..
(2,0.8) .. controls (1.7,0.7) and (1.3,0.7) .. (1,0.8) -- cycle;

\end{tikzpicture}
\caption{A proper Steiner tree on a set of $4$ points can be covered by $5$ maximal chains.}\label{chain}
\end{figure}
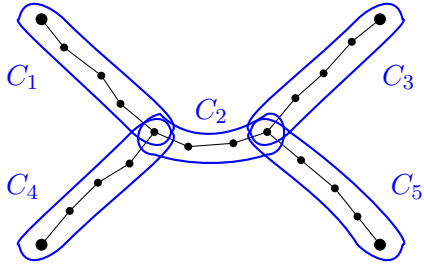
We can replace each maximal chain in a proper Steiner tree $T$ by a single edge joining the endpoints of the chain to obtain a \emph{reduced} Steiner tree $T'$ on the same set where each Steiner point has degree at least $3$, and such that $\ell(T')\leq \ell(T)$ (by the triangle inequality).
\begin{lemma}\label{chain1}
A proper Steiner tree $T=(V,E)$ on a finite set $P$ has at most $2\card{P}-3$ maximal chains.
\end{lemma}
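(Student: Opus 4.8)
The plan is to pass to the \emph{reduced} Steiner tree $T'$ described just before the statement --- the tree obtained from $T$ by collapsing each maximal chain to a single edge between its two endpoints --- and then simply to count the edges of $T'$. The first step is to observe that the maximal chains of $T$ correspond bijectively to the edges of $T'$: the correspondence is onto by construction, and it is injective because distinct maximal chains are edge-disjoint and, being paths in the tree $T$, cannot share both endpoints (a tree has a unique path between any two vertices); moreover, when $\card P\geq 2$ every maximal chain of $T$ contains at least one edge, so its two endpoints are distinct and the edge it produces is genuine, not a loop. Hence $T'$ is a loopless graph with no multiple edges.

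The second step is to check that $T'$ is again a tree. Connectedness is clear. For acyclicity I would note that $T'$ is precisely the graph obtained from $T$ by repeatedly \emph{suppressing} degree-$2$ Steiner points --- deleting such a vertex and joining its two neighbours (necessarily distinct, and not already adjacent, since $T$ contains no triangle) by a new edge. Each suppression sends a tree to a tree with one fewer vertex and one fewer edge, and leaves all other degrees unchanged, so the process can be iterated until no degree-$2$ Steiner point survives; the result is $T'$. Consequently $T'$ is a Steiner tree on $P$ whose Steiner points are exactly the Steiner points of $T$ of degree at least $3$, say the set $S$, so that $V(T')=P\cup S$ (a disjoint union).

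The last step assembles the count. As a tree, $T'$ has $\card{V(T')}-1=\card P+\card S-1$ edges, so $T$ has exactly $\card P+\card S-1$ maximal chains. Since $T'$ is a Steiner tree on $P$, the fact recalled just before the statement --- that a Steiner tree on $P$ has at most $\card P-2$ Steiner points of degree at least $3$ (a one-line degree count: $2(\card P+\card S-1)=\sum_v\deg_{T'}(v)\geq 3\card S+\card P$, using that points of $P$ have degree at least $1$) --- gives $\card S\leq\card P-2$, whence the number of maximal chains is at most $\card P+(\card P-2)-1=2\card P-3$.

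I do not expect a serious obstacle; the only points needing care are making the chain-to-edge correspondence an honest bijection onto the edge set of a \emph{simple} tree, so that the ``$n-1$ edges'' identity is legitimate, and disposing of the degenerate case $\card P=1$, where $T$ is a single vertex with one (trivial) maximal chain and the stated quantity $2\card P-3=-1$ is negative; thus the lemma is really intended, and only used, for $\card P\geq 2$.
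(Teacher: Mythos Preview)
Your proposal is correct and follows essentially the same approach as the paper: pass to the reduced Steiner tree $T'$, identify maximal chains of $T$ with edges of $T'$, and count edges using $\card{S}\leq\card{P}-2$. The paper's proof is a one-liner relying on the preceding discussion, whereas you have carefully spelled out the bijection, the simplicity and acyclicity of $T'$, and the degenerate case $\card{P}=1$; these are worthwhile checks but do not constitute a different route.
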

\begin{proof}
This follows from the above discussion by noting that in a reduced Steiner tree there are at most $\card{P}-2$ Steiner points, hence there are at most $2\card{P}-2$ vertices in total, hence there are at most $2\card{P}-3$ edges.
\end{proof}
The following is a well-known result occurring implicitly or explicitly in \cite{Wazewski1927, Kolmogorov1933, Milgram1940, Eilenberg1943, Gilbert-Pollak1968}.
\begin{lemma}\label{moore2}
Let $P$ be a finite subset of a metric space $X$ with $\card{P}\geq 3$.
Then for any Steiner tree $T$ on $P$, there exists a cycle $C$ with vertex set $P$ such that $\ell(C)\leq 2\ell(T)$.
\end{lemma}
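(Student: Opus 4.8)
The plan is to use the classical ``doubling'' trick followed by shortcutting via the triangle inequality. Fix a Steiner tree $T=(V,E)$ on $P$, so that $T$ is connected and $V$ is finite with $P\subseteq V$. There is a closed walk $W=(w_0,w_1,\dots,w_N)$ in $T$, with $w_N=w_0$, that traverses each edge of $T$ exactly twice: one may take an Eulerian circuit in the multigraph obtained from $T$ by replacing each edge with two parallel edges (this multigraph is connected and all its degrees are even), or equivalently the closed walk traced out by a depth-first traversal of $T$. Since each edge is used twice, the length of $W$, namely $\sum_{i=0}^{N-1}d(w_i,w_{i+1})$, equals $2\ell(T)$. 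Moreover $W$ visits every vertex of $V$, and in particular every point of $P$.

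Now I would shortcut $W$ to a cycle on $P$. Let $0\leq t_1<t_2<\dots<t_m<N$ be the indices at which the points of $P$ first occur along $W$, and write $p_j=w_{t_j}$, so that $p_1,\dots,p_m$ are distinct, $\set{p_1,\dots,p_m}=P$, and $m=\card{P}\geq 3$. For $j=1,\dots,m$, let $W_j$ be the subwalk of $W$ running from $p_j$ to $p_{j+1}$ (indices modulo $m$, so that $W_m$ runs from $w_{t_m}$ along $W$ through $w_N=w_0$ to $w_{t_1}=p_1$). These subwalks use pairwise disjoint sets of steps of $W$ whose union is all of $\set{0,1,\dots,N-1}$, so the sum of their lengths is exactly the length of $W$. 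On the other hand, repeated application of the triangle inequality gives $\ell(W_j)\geq d(p_j,p_{j+1})$ for each $j$. Hence, with the convention $p_{m+1}=p_1$,
\[
\sum_{j=1}^{m}d(p_j,p_{j+1})\ \leq\ \sum_{i=0}^{N-1}d(w_i,w_{i+1})\ =\ 2\ell(T).
\]
Since $m\geq 3$ and the $p_j$ are distinct, the graph $C$ with vertex set $P$ and edges $\set{p_j,p_{j+1}}$, $j=1,\dots,m$, is a cycle, and $\ell(C)=\sum_{j=1}^m d(p_j,p_{j+1})\leq 2\ell(T)$, as required.

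I do not expect a genuine obstacle here: the argument is the standard ``double tree'' bound familiar from metric travelling-salesman-type problems. The only points that need a little care are the existence of the edge-doubling closed walk $W$ (which rests on the connectedness of $T$, via Euler's theorem for multigraphs or a direct depth-first-search argument) and the verification that the first-occurrence subsequence $p_1,\dots,p_m$ really defines a cycle in the sense used in this paper --- that is, that its vertices are distinct, that it exhausts $P$, and that it has at least three vertices. The last of these is exactly where the hypothesis $\card{P}\geq 3$ is used.
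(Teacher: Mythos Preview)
Your argument is correct. The doubling of the tree to an Eulerian multigraph, followed by shortcutting the resulting closed walk at the first occurrences of the points of $P$, is the standard and clean way to prove this inequality; the only delicate points (existence of the Euler circuit, the partition of the steps of $W$ into the subwalks $W_j$, and the fact that the $p_j$ are distinct and at least three in number) are all addressed.

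The paper takes a different route. It first replaces $T$ by a reduced proper Steiner tree (contracting each maximal chain of degree-$2$ Steiner points to a single edge), and then argues by induction on $\card{P}$: after deleting the degree-$1$ vertices belonging to $P$, the remaining tree has a leaf $v$, necessarily a Steiner point adjacent to two terminals $p_1,p_2$; one applies the induction hypothesis to the smaller tree on $P\cup\{v\}\setminus\{p_1,p_2\}$ and then splices $p_1,p_2$ back into the resulting cycle via the triangle inequality. Your Eulerian approach is shorter and avoids both the reduction step and the structural case analysis; the paper's inductive argument, on the other hand, stays closer to the combinatorics of the tree and does not invoke Euler circuits. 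Either way the content is the same classical ``double tree'' bound, and the paper itself cites several sources for it.
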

\begin{proof}
By possible removing edges from $T$, we may assume that $T$ is proper.
Also, we may replace $T$ by a reduced Steiner tree without increasing its length.
It is therefore sufficient to show the lemma for reduced Steiner trees on $P$.
We use induction on $\card{P}\geq 3$.
For the case $\card{P}=3$, note that there are two possible reduced Steiner trees on $P$, one with no Steiner points, and one with a single Steiner point joined to each point in $P$.
In both cases, the inequality follows from the triangle inequality.

In the case $\card{P}>3$, if we remove each point in $P$ of degree $1$ from $T$, the remaining graph is still a tree, hence has a vertex $v$ of degree $1$.
This vertex is necessarily a Steiner point, hence is joined to two points $p_1,p_2\in P$.
Then the tree $T'$ obtained from $T$ by removing the points $p_1$, $p_1$, and the edges $\set{p_1,v}$ and $\set{p_2,v}$ is a Steiner tree on $P'=P\cup\set{v}\setminus\set{p_1,p_2}$, and by induction, there is a cycle $C'$ on $P'$ such that $\ell(C')\leq2\ell(T')$.
Let $C$ be the cycle on $P$ obtained by replacing the two edges $\set{v,u}$ and $\set{v,w}$ incident to $v$ by $\set{u,p_1}$, $\set{p_1,p_2}$, $\set{p_2,w}$.
Then it follows from the triangle inequality that $\ell(C)\leq 2\ell(T)$.
\end{proof}
\begin{lemma}\label{moore}
For any finite non-empty $P\subseteq X$, \[\MST(P)\leq \frac{2(\card{P}-1)}{\card{P}}\SMT(P).\]
\end{lemma}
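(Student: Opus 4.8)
The plan is to deduce this from Lemma~\ref{moore2} together with the classical observation that deleting the longest edge of a Hamiltonian cycle produces a Hamiltonian path, and hence a spanning tree, of controlled length. First I would dispose of the two degenerate cases. If $\card{P}=1$ then $\MST(P)=\SMT(P)=0$ and both sides vanish. If $\card{P}=2$, say $P=\set{a,b}$, then $\MST(P)=d(a,b)$, and since the triangle inequality forces $\ell(G)\ge d(a,b)$ for every connected graph $G$ whose vertex set contains $a$ and $b$, we also have $\SMT(P)=d(a,b)$; the asserted inequality then reads $d(a,b)\le d(a,b)$.

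Next, for the main case $n:=\card{P}\ge 3$, I would fix an arbitrary Steiner tree $T$ on $P$. By Lemma~\ref{moore2} there is a cycle $C$ with vertex set $P$ such that $\ell(C)\le 2\ell(T)$. A cycle on $n$ vertices has exactly $n$ edges, so some edge $\set{a,b}$ of $C$ satisfies $d(a,b)\ge\ell(C)/n$. Deleting $\set{a,b}$ from $C$ leaves a path on the vertex set $P$, which is in particular a (spanning) tree on $P$, of length $\ell(C)-d(a,b)\le\bigl(1-\tfrac1n\bigr)\ell(C)\le\tfrac{2(n-1)}{n}\ell(T)$. Hence $\MST(P)\le\tfrac{2(n-1)}{n}\ell(T)$. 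Since $\SMT(P)$ is the infimum of $\ell(T)$ over all Steiner trees $T$ on $P$, taking the infimum over $T$ yields $\MST(P)\le\tfrac{2(\card{P}-1)}{\card{P}}\SMT(P)$.

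I do not expect any genuine obstacle here: essentially all the content is already in Lemma~\ref{moore2}, and the rest is the averaging argument above. The only point that requires a little care is that $\SMT(P)$ is defined as an infimum that need not be attained, which is why I would run the argument for an \emph{arbitrary} Steiner tree $T$ and pass to the infimum only at the end, rather than work with a Steiner minimal tree from the outset. It is worth noting in passing that the constant $2(\card{P}-1)/\card{P}$ is exactly what this scheme produces — a factor $\tfrac{n-1}{n}$ lost in deleting an edge from the cycle, doubled by Lemma~\ref{moore2} — and that for $\card{P}=2$ it already specialises to the sharp value $1$.
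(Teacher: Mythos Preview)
Your proof is correct and follows essentially the same route as the paper: handle $\card{P}\le 2$ directly, then for $\card{P}\ge 3$ invoke Lemma~\ref{moore2} to get a cycle on $P$ of length at most $2\ell(T)$ and delete its longest edge to obtain a spanning path of length at most $\tfrac{\card{P}-1}{\card{P}}\cdot 2\ell(T)$. The only cosmetic difference is that the paper passes to the infimum via an auxiliary $\epsi>0$ (choosing $T$ with $\ell(T)<\SMT(P)+\epsi$ and letting $\epsi\to 0$), whereas you work with an arbitrary Steiner tree and take the infimum at the end; both are standard and equivalent.
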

\begin{proof}
The lemma is trivial if $\card{P}=1$, and follows from the triangle inequality for $\card{P}=2$.
Let $\epsi>0$.
Let $T$ be a Steiner tree on $P$ with $\ell(T) < \SMT(P)+\epsi$.
By Lemma~\ref{moore2} there exists a cycle through $P$ of length at most $2\ell(T)$.
By removing the longest edge from this cycle we obtain a path through $P$ of length at most $\frac{\card{P}-1}{\card{P}}2\ell(T)$, that is,
\[ \MST(P)\leq \frac{\card{P}-1}{\card{P}}2\ell(T) < \frac{\card{P}-1}{\card{P}}2(\SMT(P)+\epsi).\]
Since this holds for any $\epsi>0$, we obtain the conclusion.
\end{proof}

Let $S\subseteq A\subseteq X$ and $\epsi>0$.
Then $S$ is called \emph{$\epsi$-separated} if $d(x,y)\geq\epsi$ for all distinct $x,y\in S$.
Also, $S$ is called an \emph{$\epsi$-net} of $A$ if each point of $A$ has distance $<\epsi$ to some point of $S$.
Note that a maximal $\epsi$-separated subset of $A$ is also an $\epsi$-net of $A$.
The set $A$ is said to be \emph{totally bounded} if $A$ contains a finite $\epsi$-net for each $\epsi>0$.
\begin{lemma}\label{totallybounded}
Let $A\subseteq X$ satisfy $L_{MC}(A)<\infty$.
Then for any $\epsi>0$, any $\epsi$-separated subset of $A$ has at most $\max\set{\frac{2}{\epsi}L_{MC}(A),1}$ points.
In particular, $A$ is totally bounded.
\end{lemma}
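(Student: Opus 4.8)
The plan is to turn an $\epsi$-separated subset $S\subseteq A$ into a lower bound for $\MST(S)$ and then squeeze it against $L_{MC}(A)$. First I would observe that if $\card{S}=n$, then any connected graph on vertex set $S$ has at least $n-1$ edges, and each such edge $\set{x,y}$ satisfies $d(x,y)\geq\epsi$ by $\epsi$-separation; hence $\MST(S)\geq(n-1)\epsi$. On the other side, $\SMT(S)\leq L_{MC}(A)$ directly from the definition of $L_{MC}$ (as $S$ is a finite nonempty subset of $A$), and Lemma~\ref{moore} gives $\MST(S)\leq\frac{2(n-1)}{n}\SMT(S)$.

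Chaining these three inequalities, for $n\geq 2$ we get $(n-1)\epsi\leq\frac{2(n-1)}{n}L_{MC}(A)$, and dividing by $n-1>0$ yields $n\leq\frac{2}{\epsi}L_{MC}(A)$. When $n\leq 1$ the asserted bound $n\leq\max\set{\frac{2}{\epsi}L_{MC}(A),1}$ holds trivially, so in all cases $\card{S}\leq\max\set{\frac{2}{\epsi}L_{MC}(A),1}$, which is finite since $L_{MC}(A)<\infty$.

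For total boundedness, fix $\epsi>0$. The first part bounds the cardinality of every $\epsi$-separated subset of $A$ by one and the same finite number, so there exists an $\epsi$-separated subset of $A$ of maximum cardinality; such a set is a maximal $\epsi$-separated subset of $A$, hence, as noted just before the lemma, an $\epsi$-net of $A$. Thus $A$ contains a finite $\epsi$-net for every $\epsi>0$, i.e.\ $A$ is totally bounded. The only point requiring a moment's care — not really an obstacle — is the separate treatment of $\card{S}\in\set{0,1}$, since the division by $n-1$ needs $n\geq 2$; this is precisely the reason the stated bound is a maximum with $1$.
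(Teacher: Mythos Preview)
Your proof is correct and follows essentially the same route as the paper's: bound $\MST(S)$ below by $(\card{S}-1)\epsi$, apply Lemma~\ref{moore} to pass to $\SMT(S)$, compare with $L_{MC}(A)$, and then derive total boundedness from the existence of a finite maximal $\epsi$-separated subset. The paper phrases the last step as a greedy point-by-point selection rather than taking a set of maximum cardinality, but this is only a cosmetic difference.
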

\begin{proof}
Let $P$ be a finite $\epsi$-separated subset of $A$, that is, $d(x,y)\geq\epsi$ for all distinct $x,y\in P$.
Without loss of generality, $\card{P}\geq 2$.
Then $\MST(P)\geq (\card{P}-1)\epsi$, since a spanning tree of $P$ has $\card{P}-1$ edges, and by the previous lemma,
\[\SMT(P) \geq \frac{\card{P}}{2(\card{P}-1)}\MST(P)
\geq \frac{\card{P}}{2(\card{P}-1)}(\card{P}-1)\epsi = \frac{\epsi}{2}\card{P}.\]
It follows that \[\card{P}\leq \frac{2}{\epsi}\SMT(P)\leq\frac{2}{\epsi}L_{MC}(A).\]
Therefore, if we choose points of $A$ one by one, keeping the chosen subset $\epsi$-separated, we will stop after at most $\max\set{1,\frac{2}{\epsi}L_{MC}(A)}$ steps with a maximal $\epsi$-separated subset of $A$.
\end{proof}

\begin{lemma}\label{chain2}
Consider a chain $C=x_1x_2\dots x_n$ of length $\ell(C)$ in a Steiner tree.
Then for any $t>0$ there exist $k < 1+2\ell(C)/t$ vertex-disjoint subchains $C_1,\dots,C_k$ such that each $\ell(C_i)\leq t$, the vertex set of $C$ is partitioned by the vertex sets of the $C_i$ and all edges of $C$ not in any $C_i$ have length $>t$.
\end{lemma}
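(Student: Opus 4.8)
The plan is to build the subchains by a single greedy sweep of $C$ and to bound their number by an amortized length estimate.

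First, notation: write $C=x_1x_2\cdots x_n$, and for $1\le i<n$ let $e_i$ be the edge $\{x_i,x_{i+1}\}$, of length $\ell(e_i)=d(x_i,x_{i+1})$, so that $\ell(C)=\sum_{i=1}^{n-1}\ell(e_i)$. The structural fact I would lean on is that a subchain containing an edge of length $>t$ automatically has length $>t$; hence any such edge is forced to lie strictly between two consecutive pieces, which both explains the last clause of the lemma and shows that the cuts should be placed exactly at those edges whose inclusion would push the current piece past length $t$.

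The construction: sweep $x_1,x_2,\dots$ from left to right, maintaining a current subchain that begins at $x_1$. When the current subchain ends at $x_j$, examine $e_j$: if appending $e_j$ keeps the subchain's length $\le t$, append it, so the current subchain now ends at $x_{j+1}$; otherwise---a case that includes every $e_j$ with $\ell(e_j)>t$---close the current subchain at $x_j$, declare $e_j$ deleted, and start a fresh subchain $\{x_{j+1}\}$. When $x_n$ is reached, close the current subchain. This yields vertex-disjoint subchains $C_1,\dots,C_k$ with $V(C)=V(C_1)\sqcup\cdots\sqcup V(C_k)$ and each $\ell(C_i)\le t$, and whose deleted edges are precisely the edges of $C$ lying in no $C_i$; moreover, for every $i<k$ the closing rule yields $\ell(C_i)+\ell(e^{(i)})>t$, where $e^{(i)}$ is the deleted edge immediately after $C_i$, so in particular $\ell(e^{(i)})>t$ whenever $C_i$ is a single vertex.

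Finally, the count: the subchains $C_1,\dots,C_{k-1}$ and the deleted edges $e^{(1)},\dots,e^{(k-1)}$ form $2(k-1)$ pairwise-disjoint portions of $C$, so
\[
(k-1)\,t\;<\;\sum_{i=1}^{k-1}\bigl(\ell(C_i)+\ell(e^{(i)})\bigr)\;\le\;\ell(C),
\]
and hence $k<1+\ell(C)/t\le 1+2\ell(C)/t$. The only point requiring care, as I see it, is arranging the greedy step so that the partition property, the bound $\ell(C_i)\le t$ on each piece, the description of the deleted edges, and the bound on $k$ all hold simultaneously; the length estimate is routine amortization, so I do not anticipate a substantive obstacle.
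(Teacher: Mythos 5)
Your construction and the amortized count are essentially the paper's ``online bin-packing'' argument, and your count is in fact slightly sharper: pairing each $C_i$ ($i<k$) with the edge severed immediately after it gives $k<1+\ell(C)/t$. The gap is in the final clause of the lemma. Your rule severs an edge $e_j$ whenever appending it would push the current piece past length $t$, and such an edge need not itself be long: if the current piece has length $0.7t$ and $\ell(e_j)=0.7t$, then $e_j$ is severed although $\ell(e_j)\le t$. So in your decomposition the edges of $C$ lying in no $C_i$ can have length $\le t$, which contradicts the stated conclusion; your own remark that $\ell(e^{(i)})>t$ only ``whenever $C_i$ is a single vertex'' is precisely the symptom. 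The paper resolves this the other way: it only ever severs edges that are themselves of length $>t$ (the ``outsize'' chains), and pays for this by letting consecutive pieces share an endpoint, so that the $C_i$ cover rather than partition the vertex set (singleton chains are inserted only between two consecutive outsize edges).

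In fairness, the two requirements you tried to meet simultaneously --- a genuine vertex partition and ``only edges of length $>t$ are omitted'' --- are incompatible in general: take $C$ with three edges each of length $3t/5$; no subchain of length $\le t$ contains two of them, so any partition of the four vertices into pieces of length $\le t$ must omit an edge of length $3t/5\le t$. So the lemma has to be read with the paper's construction in mind (a cover, with exactly the long edges omitted). Since the proof of Theorem~\ref{maintheorem} uses only that the vertex sets of the $C_{i,j}$ cover the vertices of $C_i$, that $\sum_j\ell(C_{i,j})\le\ell(C_i)$, that each $\ell(C_{i,j})\le t$, and the bound on $k$, your decomposition would in fact serve there just as well; but as a proof of the lemma as stated it does not deliver the last clause.
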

\begin{proof}
We use the following simple online bin-packing algorithm.
Walk along $C$ from $x_1$ to $x_n$, cutting it up at vertices into chains that are as long as possible, trying to keep their lengths $\leq t$.
Whenever an edge of length $>t$ is found, it is taken to be a single ``outsize'' chain (Fig.~\ref{fig:chain}).
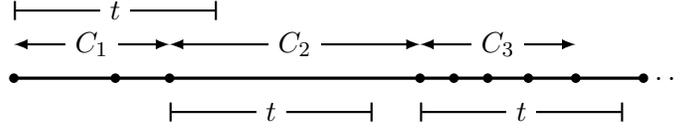
\begin{figure}
\centering
\begin{tikzpicture}[thick,scale=0.9]
\draw[|-|] (0,1) -- node[midway,fill=white] {$t$} (3,1);
\draw[|-|] (2.3,-0.5) -- node[midway,fill=white] {$t$} (5.3,-0.5);
\draw[|-|] (6,-0.5) -- node[midway,fill=white] {$t$} (9,-0.5);
\draw[very thick] (0,0) -- (9.3,0) node [right] {\dots};
\fill (0,0) circle (2pt);
\fill (1.5,0) circle (2pt);
\fill (2.3,0) circle (2pt);
\draw[latex-latex] (0,0.5) -- node[midway,fill=white] {$C_1$} (2.3,0.5);
\fill (6,0) circle (2pt);
\fill (6.5,0) circle (2pt);
\fill (7,0) circle (2pt);
\fill (7.6,0) circle (2pt);
\fill (8.3,0) circle (2pt);
\draw[latex-latex] (2.3,0.5) -- node[midway,fill=white] {$C_2$} (6,0.5);
\draw[latex-latex] (6,0.5) -- node[midway,fill=white] {$C_3$} (8.3,0.5);
\fill (9.3,0) circle (2pt);

\end{tikzpicture}
\caption{Online chain cutting}\label{fig:chain}
\end{figure}
Denote the consecutive chains made in this way by $C_1,\dots,C_k$.
Because of maximality, the sum of the lengths of any two adjacent chains is $>t$.
It follows that $2\ell(C)>(k-1)t$.
Finally, remove all outsize chains and add a new chain $x_i$ of length $0$ between any two adjacent outsize chains $x_{i-1}x_i$ and $x_ix_{i+1}$.
In this way we remove at least as many chains as we add new chains of length $0$.
We end up with at most $k$ chains, each of length $\leq t$, with vertices covering the vertices of $C$.
\end{proof}
\begin{proof}[Proof of Theorem~\ref{maintheorem}]
We first show that $L^*(A)\leq L_{MC}(A)$.
Without loss of generality, $L_{MC}(A)<\infty$.
Then, by Lemma~\ref{totallybounded}, $A$ is totally bounded.
The idea of the proof is as follows.
We take a finite $P\subseteq A$ with $\SMT(P)$ very close to $L_{MC}(A)$.
Then we extend $P$ to an $\epsi$-net $P'$ with $\epsi$ very small, depending on $\delta$ and another auxiliary large constant $R$.
Then $\SMT(P')$ is still very close to $L_{MC}(A)$.
We take a proper Steiner tree $T'$ on $P'$ of length very close to $\SMT(P')$, and use Lemmas~\ref{chain1} and \ref{chain2} to cut the proper subtree $T$ of $T'$ joining $P$ into small pieces to create a good $\delta$-cover of $A$.

Let $\delta>0$ be arbitrary.
Without loss of generality, we may assume $\delta<1/8$.
Fix a finite $P\subseteq A$ with $\SMT(P)>L_{MC}(A)-\delta/4$.
Set
\[ \epsi = \min \set{ \min_{x,y\in P,x\neq y} d(x,y), \delta^2,\frac{\delta}{\card{P}} }. \]
Then $P$ is $\epsi$-separated, and by Lemma~\ref{totallybounded}, $P$ is contained in a maximal $\epsi$-separated finite $P'\subseteq A$ which is also an $\epsi$-net.
Let $T'=(V',E')$ be a proper Steiner tree on $P'$ such that
\[ \ell(T') < \SMT(P') +\frac{\delta}{4}\leq L_{MC}(A)+\frac{\delta}{4}.\]
We decompose $T'$ as follows.
Let $T=(V,E)$ be the union of all the paths in $T'$ between pairs of points from $P$.
Then $T$ is clearly a Steiner tree on $P$.
Remove the edges of $T$ from $T'$ (but not the vertices)
to obtain $G=(V',E'\setminus E)$.
Let $T_v$ be the connected component of $G$ that contains $v\in V$.
Denote the maximal chains of $T$ by $C_i$, $i=1,\dots,c$ (see Fig.~\ref{decomposition}).
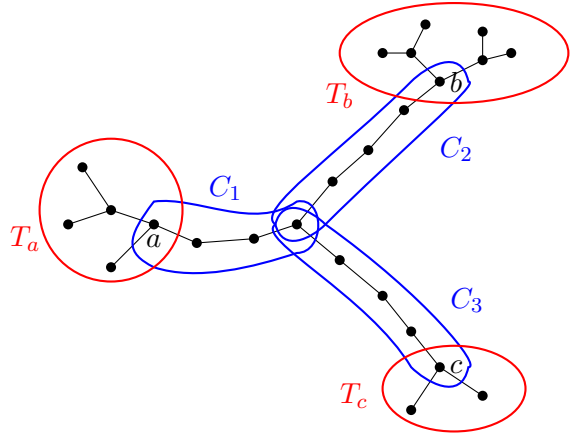
\begin{figure}
\centering
\begin{tikzpicture}[scale=1.9]
\draw (1,1) -- (0.7,1.1) -- (0.5,1.4);
\draw (0.7,1.1) -- (0.4,1);
\draw (0.7,0.7) -- (1,1)  -- (1.3,0.87)  -- (1.7,0.9)  -- (2,1)  -- (2.30,0.75)  -- (2.60,0.5)  -- (2.80,0.25)  -- (3,0) -- (2.8,-0.3);
\draw (3,0) -- (3.3,-0.2);
%\draw (0,2)  -- (0.20,1.75)  -- (0.53,1.5)  -- (0.70,1.25)  -- (1,1);
\draw (2,1) -- (2.25,1.30)  -- (2.5,1.52)  -- (2.75,1.80)  -- (3,2) -- (2.8,2.2) -- (2.9,2.4);
\draw (2.8,2.2) -- (2.6,2.2);
\draw (3,2) -- (3.3,2.15) -- (3.5,2.2);
\draw (3.3,2.15) -- (3.3,2.35);

%\fill (0,0) circle (1.5pt);
%\fill (0.25,0.30) circle (1pt);
%\fill (0.5,0.55) circle (1pt);
%\fill (0.78,0.72) circle (1pt);

\fill (1,1) circle (1pt) node[below] {$a$};
\fill (0.7,1.1) circle (1pt);
\fill (0.5,1.4) circle (1pt);
\fill (0.4,1) circle (1pt);
\fill (0.7,0.7) circle (1pt);

\fill (1.3,0.87) circle (1pt);
\fill (1.7,0.9) circle (1pt);

\fill (2,1) circle (1pt);

\fill (2.30,0.75) circle (1pt);
\fill (2.60,0.5) circle (1pt);
\fill (2.80,0.25) circle (1pt);
\fill (3,0) circle (1pt) node[right] {$c$};
\fill (2.8,-0.3) circle (1pt);
\fill (3.3,-0.2) circle (1pt);

\fill (2.25,1.30) circle (1pt);
\fill (2.5,1.52) circle (1pt);
\fill (2.75,1.80) circle (1pt);
\fill (3,2) circle (1pt) node[right] {$b$};
\fill (2.8,2.2) circle (1pt);
\fill (2.9,2.4) circle (1pt);
\fill (2.6,2.2) circle (1pt);
\fill (3.3,2.15) circle (1pt);
\fill (3.5,2.2) circle (1pt);
\fill (3.3,2.35) circle (1pt);
%%%%%%%%%%%%%%%%%%%%%%%%%%%%%%%%%%%%%%%

\draw[thick, blue] (3.2,0) .. controls (3.3,0) and (2.9,0.5) .. node[above right] {$C_3$}
(2.2,1) .. controls (2.1,1.1) and (1.9,1.2) .. 
(1.85,1) .. controls (1.7,0.9) and (2.5,0.5) .. 
(2.8,0) .. controls (2.9,-0.1) and (3.15,-0.25) .. (3.2,0) -- cycle;

\draw[thick, blue] (3.2,2) .. controls (3.3,2) and (2.7,1.5) .. node[below right] {$C_2$}
(2.2,1) .. controls (2.1,0.9) and (1.9,0.8) .. 
(1.85,1) .. controls (1.7,1.1) and (2.3,1.5) .. 
(2.8,2) .. controls (2.9,2.1) and (3.15,2.25) .. (3.2,2) -- cycle;

\draw[thick, blue] (1,0.7) .. controls (0.8,0.8) and (0.8,0.9) ..
(1,1.16) .. controls (1.3,1.2) and (1.7,0.95) .. node[above] {$C_1$}
(2,1.16) .. controls (2.2,1.2) and (2.2,0.8) ..
(2,0.8) .. controls (1.7,0.7) and (1.3,0.6) .. (1,0.7) -- cycle;

\draw[thick, red] (0.7,1.1) circle (5mm); \draw[red] (0.1,0.9) node {$T_a$};
\draw[thick, red] (3.1,2.2) ellipse (8mm and 3.5mm); \draw[red] (2.3,1.9) node {$T_b$};
\draw[thick, red] (3.1,-0.15) ellipse (5mm and 3mm); \draw[red] (2.4,-0.2) node {$T_c$};

\end{tikzpicture}
\caption{Decomposing the Steiner tree $T'$ with $P=\set{a,b,c}$}\label{decomposition}
\end{figure}
By Lemma~\ref{chain1}, $c\leq2\card{P}-3$.
We use Lemma~\ref{chain2} to cut each chain $C_i$ up into disjoint subchains $C_{i,1},\dots,C_{i,k(i)}$, each of length $\ell(C_{i,j})\leq \delta/2$, where the number of pieces for each $C_i$ is $k(i)< 1+\frac{4\ell(C_i)}{\delta}$.
Then the $C_{i,j}$ and the $T_v$ together cover the vertex set of $T'$, which includes the $\epsi$-net $P'$.

Some of the $T_v$ are so small that they may be ignored, as follows.
We say that $T_v$ is \emph{large} if $\ell(T_v)\geq\epsi$.
Let $L=\setbuilder{v\in V}{\ell(T_v)\geq\epsi}$ be the set of vertices $v$ with a large $T_v$.
Let $a\in A$.
Then there exists $p\in P'\subseteq V'$ with $d(a,p)<\epsi$.
If $p$ is not in any chain $C_i$ or large $T_v$, then $p$ is contained in some $T_v$ of length $\ell(T_v)<\epsi$.
However, then $d(p,v)<\epsi$, hence $d(a,v)<2\epsi$ for some $v\in V$.
Therefore, \[U_{i,j}=\bigcup_{v\in V(C_{i,j})} B(v,2\epsi)\qquad (1\leq i\leq c, 1\leq j\leq k(i))\]
and \[U_v=\bigcup_{w\in V(T_v)} B(w,\epsi)\qquad(v\in L)\]
together cover $A$.
We next estimate the diameters of the $U_{i,j}$ and $U_v$ in this cover.
First, 
\[\diam(U_{i,j}) \leq \diam(C_{i,j})+4\epsi\leq \ell(C_{i,j})+4\epsi\leq\frac{\delta}{2}+4\delta^2 <\delta.\]
Next, the total length of the large $T_v$ is
\begin{align*}
\sum_{v\in L}\ell(T_v) &\leq \ell(T')-\ell(T)\leq\ell(T')-\SMT(P)\\ &< \left(L_{MC}(A)+\frac{\delta}{4}\right) - \left(L_{MC}(A)-\frac{\delta}{4}\right)= \frac{\delta}{2}.
\end{align*}
In particular, since $\card{L}\epsi\leq\sum_{v\in L}\ell(T_v)$, the number of large $T_v$ is $\card{L} < \delta/(2\epsi)$.
Since each $T_v$ is connected, $U_v$ has diameter
\begin{align}
\diam(U_v)&\leq\ell(T_v)+2\epsi < \frac{\delta}{2}+2\delta^2 < \delta. \label{B}
\end{align}
It follows that
\[ \setbuilder{U_{i,j}}{1\leq i\leq c, 1\leq j\leq k(i)}\cup\setbuilder{U_v}{v\in L}\]
is a $\delta$-cover of $A$ with
\begin{align*}
&\phantom{{}\mathrel{<}{}} \sum_{i=1}^c\sum_{j=1}^{k(i)}\diam(U_{i,j})+\sum_{v\in L}\diam(U_v)\\
&< \sum_{i=1}^c\sum_{j=1}^{k(i)}(\ell(C_{i,j})+4\epsi)+\sum_{v\in L}(\ell(T_v)+2\epsi)\\
&< \ell(T)+4\epsi\sum_{i=1}^c\left(1+\frac{4\ell(C_i)}{\delta}\right)+2\epsi\card{L}\\
&\leq \left(1+\frac{16\epsi}{\delta}\right)\ell(T) + 4 \epsi c +2\epsi\card{L}\\
&\leq \left(1+16\delta\right)\ell(T') + 4 (2\card{P}-3)\epsi +\delta\\
&< \left(1+16\delta\right)\left(L_{MC}(A)+\frac{\delta}{4}\right)+8\delta+\delta.
\end{align*}
Therefore, $L^*_\delta(A)\leq \left(1+16\delta\right)\left(L_{MC}(A)+\frac{\delta}{4}\right)+9\delta$.
Letting $\delta\to0$, we obtain \[L^*(A)=\lim_{\delta\to 0} L^*_\delta(A)\leq L_{MC}(A).\]

The second inequality $L_{MC}(A)\leq L_{IM}(A)$ is obvious.

\smallskip
It remains to show that if $A$ is a connected subset of $X$, then $L_{IM}(A)\leq L^*(A)$.
We may assume without loss of generality that $L^*(A)<\infty$.
As already remarked, since neither $L_{IM}$ nor $L^*$ depends on $X$, we may also assume that $A$ is the whole metric space.
Let $\epsi>0$ and let $P$ be an arbitrary non-empty finite subset of $A$.
We want to use a good $\delta$-covering of $A$ (for sufficiently small $\delta>0$) to construct a good Steiner tree of $P$ with Steiner points in $A$, in order to show that $\SMT_A(P)\leq L^*(A)+\epsi$.
Without loss of generality, $\card{P}\geq 2$.
Since $A$ then has at least two points, its connectedness implies that any non-empty open subset of $A$ is infinite.
Let \[\delta=\min\set{\frac{\epsi}{2\card{P}}, \min_{x,y\in P, x\neq y}d(x,y)}.\]
Let $\setbuilder{U_i}{i\in\bN}$ be a $\delta$-cover of $A$ such that
\[ \sum_{i=1}^\infty\diam(U_i) < L^*(A)+\frac{\epsi}{2}.\]
If we instead take a $(\delta/2)$-cover and replace each $U_i$ by $\bigcup_{x\in U_i}B(x,\eta/2^i)$ for a sufficiently small $\eta>0$, we may assume that each $U_i$ is open in $A$.
By the choice of $\delta$, $\card{U_i\cap P}\leq 1$ for all $i\in\bN$.
We say that two points $x,y\in A$ are \emph{joined by $\setbuilder{U_i}{i\in\bN}$} if there exists a finite sequence $U_{i(1)},\dots,U_{i(k)}$ such that $x\in U_{i(1)}$, $y\in U_{i(k)}$, and for each $t=1,\dots,k-1$, $U_{i(t)}\cap U_{i(t+1)}\neq\emptyset$.
The connectedness of $A$ implies that any two $x,y\in A$ are connected by $\set{U_i}$.
Indeed, for a fixed $x\in A$, the set $S$ of all $y\in A$ such that $x$ and $y$ are connected by $\set{U_i}$ is open, as it is a union of $U_i$.
The complement $A\setminus S$ is also open, since if $z\in A\setminus S$, then $z\in U_j$ for some $j$ and then $U_j\subseteq A\setminus S$, otherwise there would exist $y\in U_j\cap S$, hence the sequence $U_{i(1)},\dots,U_{i(k)}$ demonstrating that $x$ and $y$ are joined, together with $U_{i(k+1)}=U_j$, show that $z\in S$, a contradiction.

For any $x,y\in A$, choose a sequence $U_{i(1,x,y)},\dots,U_{i(k(x,y),x,y)}$ demonstrating that $x$ and $y$ are joined, where we choose a single $U_{i(1,x,x)}$ when $x=y$.
Then the graph $\CG$ on the set $\CV=\setbuilder{i(j,x,y)}{x,y\in A, 1\leq j\leq k(x,y)}$ with edges $\CE=\setbuilder{\set{i,j}}{U_i\cap U_j\neq\emptyset}$ is connected, hence contains a minimal subgraph that contains $\setbuilder{i(1,x,x)}{x\in A}$.
This subgraph is necessarily a tree $\CT=(I,\CE')$, with $I$ a non-empty finite subset of $\bN$ and $\CE'$ a finite subset of $\CE$.
For each edge $e=\set{i,j}\in\CE'$, choose $x_{e}\in (U_i\cap U_j)\setminus P$ (recall that $U_i\cap U_j$ is infinite).
For each $i\in I$, let $V_i=(P\cap U_i)\cup\setbuilder{x_{e}}{i\in e\in\CE'}$.
Then
\begin{equation}\label{star}
\card{V_i} \leq \begin{cases}
\deg_\CT(i) &\text{if $P\cap U_i=\emptyset$,}\\
\deg_\CT(i)+1 &\text{if $P\cap U_i\neq\emptyset$.}
\end{cases}
\end{equation}
Let $i\in I$.
Since $\deg_\CT(i)\geq 1$, $\card{V_i}\geq 1$.
If $\card{V_i}=1$, then $\deg_\CT(i)=1$ and $P\cap U_i=\emptyset$.
However, then $i$ and the edge of $\CT$ incident with $i$ are redundant, contradicting the minimality of $\CT$.
Therefore, $\card{V_i}\geq 2$ for each $i\in I$.
Form a graph $G$ on the vertex set $V=\bigcup_{i\in I}V_i$ by joining the vertices in each $V_i$ by an arbitrary path.
Since $\CT$ is connected, $G$ is also connected.
Note that two $V_i$ could intersect in more than one point, hence $G$ is not necessarily a tree.
Nevertheless, any spanning tree $T$ of $G$ will be a Steiner tree on $P$, and we obtain
\begin{align*}
\ell(T)\leq \ell(G) &\leq \sum_{i\in I}(\card{V_i}-1)\diam(U_i)\\
&= \sum_{i\in I}\diam(U_i) + \sum_{i\in I}(\card{V_i}-2)\diam(U_i)\\
&\leq L^*(A)+\frac{\epsi}{2} + \delta\sum_{i\in I}(\card{V_i}-2).
\end{align*}
By \eqref{star} and $\card{\CE'}=\card{I}-1$,
\begin{equation*}
\sum_{i\in I}(\card{V_i}-2) \leq \sum_{i\in I}(\deg_\CT(i)-2)+\card{P}
= -2+\card{P}.
\end{equation*}
It follows that
\begin{equation*}
\SMT_A(P) \leq \ell(T) \leq L^*(A) +\frac{\epsi}{2} + (\card{P}-2)\delta
\leq L^*(A) +\epsi.
\end{equation*}
This holds for arbitrary $P\subseteq A$ and $\epsi>0$, and we conclude that $L_{IM}(A)\leq L^*(A)$.
\end{proof}
\section{Lower semicontinuous with respect to which topology?}\label{topology}
Given a topological space $(\CX,\CT)$, a function $f\colon \CX\to\bR$ is defined to be \emph{lower semicontinuous} at a point $x_0\in \CX$ if for any $\epsi>0$ there exists an open neighbourhood $U$ of $x_0$ such that for all $x\in U$, $f(x_0)<f(x)+\epsi$.
Go\l\k{a}b's original theorem and each of the Theorems~\ref{golab}, \ref{frink}, \ref{bognar}, \ref{fremlin}, \ref{lsc} assert the lower semicontinuity of $L^*$ on the collection of connected subsets of a metric space for an appropriate topology on this collection.
For instance, the well-known \emph{Hausdorff topology} is the topology on the collection $\CC(X)$ of all closed subsets of $X$ generated by the basis of all sets of the form
\[ \CB(A,r) = \setbuilder{S\in\CC(X)}{D(A,S)<r}, \qquad A\in\CC(X), r>0.\]
Go\l\k{a}b's theorem states that on the subspace of all continua of $X$, $L^*$ is lower semicontinuous with respect to the Hausdorff topology.
Theorem~\ref{golab} can also be stated as the lower semicontinuity of $L^*$, but now on the coarser \emph{lower Hausdorff topology}, which we define on the space $\CP(X)$ of all subsets of $X$.
This is the topology $\CH^-$ generated by the basis consisting of all sets of the form
\[ \CB^-(A,r) = \setbuilder{S\in\CP(X)}{e(A,S)<r}, \qquad A\in\CP(X), r>0.\]
Then Theorems~\ref{golab} and \ref{frink} assert the lower semicontinuity of $L^*$ on the subspace of $(\CP(X),\CH^-)$ consisting of the connected subsets of $X$.

For Theorem~\ref{bognar}, \ref{fremlin} and \ref{lsc}, we introduce the
\emph{lower Vietoris topology}.
This is the topology $\CV^-$ on $\CP(X)$ generated by the subbasis consisting of all sets of the form
\[ A^- = \setbuilder{S\in\CP(X)}{S\cap A\neq\emptyset}, \qquad A\text{ open in }X.\]
Theorems~\ref{bognar}, \ref{fremlin} and \ref{lsc}  assert the lower semicontinuity of $L^*$ on the subspace of $(\CP(X),\CV^-)$ consisting of the connected subsets of $X$.
Since the lower Vietoris topology is coarser than the lower Hausdorff topology \cite[Lemma~3.2]{Michael1951} (see also \cite[Prop.~4.2.1(ii)]{Klein1984}), Theorems~\ref{bognar}, \ref{fremlin} and \ref{lsc} are formally stronger than Theorems~\ref{golab} and \ref{frink}.
However, the difference is slight, as these two topologies coincide if and only if the metric space $X$ is totally bounded \cite[Theorem~5]{Levi1993}, and for general metric spaces $X$ the two topologies coincide on the subset of $\CP(X)$ of all totally bounded subsets of $X$.
(This can be proved analogously to Prop.~4.2.2(ii) of \cite{Klein1984}.)
Since sets that are not totally bounded have infinite outer linear measure, it follows that at points of $\CP(X)$ that have finite outer linear measure, the two topologies are the same.
Therefore, the first part of Theorem~\ref{lsc} has the same strength as the first part of Theorem~\ref{frink}.

\section{Historical remarks}\label{history}
\subsection{The length of continua}
The definition of outer linear measure was originally introduced by Carath\'eodory \cite{Caratheodory1914} and generalised by Hausdorff \cite{Hausdorff1918} to arbitrary (including fractional) dimensions.

Menger \cite{Menger1931, Menger1932}, Mimura \cite{Mimura1933} and Choquet \cite{Choquet1938} were early pioneers in the study of Steiner trees (see the historical surveys \cite{Schrijver2005} and \cite{BGTZ}).
No doubt unaware of Choquet's paper, Leonard Blumenthal included Menger's question whether $L_{MC}$ equals arc length in general metric spaces in his book on distance geometry \cite[pp.~66--67]{BlumenthalBook}.
His student, William Ettling \cite{Ettling1978}, claimed to have a proof for all proper metric spaces, but unfortunately his proof, which follows Mimura's first proof very closely,
made the tacit assumption that $X$ is \emph{convex} in the sense that any two points $x,y\in X$ are joined by an arc of length $d(x,y)$.
It is also not difficult to modify Mimura's first proof to work for any connected subset of any metric space, without assuming compactness of the set or convexity of the space, or having to use Go\l\k{a}b's Theorem.
(Unfortunately, Mimura's second proof uses the lower semicontinuity of the length of continua.)

\subsection{Existence of connected sets of minimum length}
For further existence results along the lines of Theorems~\ref{shortest1} and \ref{shortest2}, see Chapter~4 of the book of Ambrosio and Tilli \cite{Ambrosio2004}, and the papers of Ducret and Troyanov \cite{Ducret2008},
Paolini and Ulivi \cite{PaoliniUlivi2010} and 
Paolini and Stepanov \cite{PaoliniStepanov2013}.
Ivanov, Nikonov and Tuzhilin \cite{Ivanov2005} characterize the countable metric spaces that have an infinite spanning tree of finite total length.
Starting with Jones \cite{Jones1990}, a different approach has been taken to characterize the subsets of Euclidean spaces that are contained in rectifiable curves.
For an overview of this approach, see Schul's survey \cite{Schul2007}.

\subsection{Go\l\k{a}b's Theorem and its proofs}
Go\l\k{a}b's own proof \cite{Golab1928} of his theorem depends on various results of Wa\.zewski \cite{Wazewski1927}, one of them stating that a continuum $C$ with $L^*(C)<\infty$ in $\bR^n$ is the image of an absolutely continuous mapping $f\colon[0,L^*(C)]\to\bR^n$ that is non-expansive in each coordinate.
This is related to Lemma~\ref{moore2} (see also \cite[Theorem~2]{Eilenberg1943} and \cite[Theorem~4.4]{Alberti2017}).
His proof furthermore uses compactness in the form of the Arzela--Ascoli Theorem.
As can be seen from the proof in this paper, as well as Frink's proof \cite{Frink1936}, it is not necessary to use compactness or to take subsequences.
Among all proofs of Go\l\k{a}b's Theorem known to us, Frink's proof seems to be the simplest and shortest, and proceeds directly from the definition of outer linear measure.

At a point where Go\l\k{a}b's Theorem would be needed, Besicovitch \cite[Proof of Theorem~12]{Besicovitch1938} merely states that it is ``easy to see\dots''.

Faber, Mycielski and Pedersen \cite{FMP1984} use Go\l\k{a}b's Theorem to show that, given a compact subset $S$ of the plane, there is a shortest connected closed subset of the plane which intersects all lines intersecting $S$.
They give a proof of Go\l\k{a}b's Theorem that depends on another result of Wa\.zewski, namely that a continuum with finite outer linear measure is path-connected.
This is the proof that is presented in Falconer's book \cite[Chapter~3]{Falconer1986}.
Like Go\l\k{a}b's original proof, it uses compactness, but in the form of the Blaschke selection theorem.

Bogn\'ar's proof of his version of Go\l\k{a}b's Theorem \cite{Bognar1989} is somewhat involved and even invokes Zorn's Lemma at some point.
It seems that Bogn\'ar's main purpose was to prove what we have stated as Corollary~\ref{bognartheorem}.
Fremlin \cite{Fremlin1992} gives his own short proof of this corollary and refers to it as \emph{M.~Bogn\'ar's theorem}, although it would be very surprising if it hadn't been known earlier.
Nevertheless, it frequently occurs as a technical result in the literature on existence results in the calculus of variations, for instance, Proposition~2.5 in \cite{DalmasoToader2002} and Lemma~2.6 in \cite{PaoliniStepanov2013}.
According to Go\l\k{a}b \cite{Golab1928}, Wa\.zewski proved the following weaker statement:
\begin{theorem}
Let $C_1\subseteq C_2\subseteq C_3\subseteq $ be an increasing sequence of continua such that $\sup_i L^*(C_i)<\infty$.
Then $L^*(\bigcup_i C_i) = L^*(\overline{\bigcup_i C_i})$.
\end{theorem}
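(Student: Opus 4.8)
The plan is to recognise that this theorem is an immediate consequence of Corollary~\ref{bognartheorem}, and that the finiteness hypothesis $\sup_i L^*(C_i)<\infty$ plays no role at all. Write $A=\bigcup_{i\in\bN}C_i$. The first step is the purely topological observation that $A$ is connected: if $A=U\cup V$ with $U,V$ nonempty, disjoint, and relatively open in $A$, choose $x\in U$ and $y\in V$; since the $C_i$ are nested there is an index $j$ with $x,y\in C_j$, and then $C_j\cap U$ and $C_j\cap V$ would disconnect the continuum $C_j$, a contradiction. The second step is simply to apply Corollary~\ref{bognartheorem} to the connected set $A$, which gives $L^*(A)=L^*(\overline{A})$, that is, $L^*(\bigcup_i C_i)=L^*(\overline{\bigcup_i C_i})$.

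If one prefers a proof that does not cite the corollary as a black box, one can unwind it using the machinery already set up. Both $A$ and $\overline{A}$ are connected, so Theorem~\ref{maintheorem} gives $L^*(A)=L_{MC}(A)$ and $L^*(\overline{A})=L_{MC}(\overline{A})$; and $L_{MC}(A)=L_{MC}(\overline{A})$ because, for every finite $P\subseteq\overline{A}$ and every $\eta>0$, one can pick a finite $P'\subseteq A$ with each point of $P'$ within $\eta$ of the corresponding point of $P$, so that $e(P,P')<\eta$ and, by Lemma~\ref{steiner2}, $\SMT(P')$ comes within $\card{P}\eta$ of $\SMT(P)$; letting $\eta\to 0$ shows that the supremum defining $L_{MC}$ over finite subsets of $A$ already reaches $\SMT(P)$ for every finite $P\subseteq\overline{A}$, whence $L_{MC}(A)\ge L_{MC}(\overline{A})$, the reverse inequality being monotonicity.

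I do not expect a genuine obstacle here. The only points needing care are the elementary fact that an increasing union of connected sets is connected and the monotonicity of $L^*$ (which already delivers the easy inequality $L^*(A)\le L^*(\overline{A})$); all the real content is carried by Corollary~\ref{bognartheorem}, equivalently by the equality $L^*=L_{MC}$ on connected sets from Theorem~\ref{maintheorem}. In the write-up it is probably worth remarking explicitly that Wa\.zewski's finiteness assumption $\sup_i L^*(C_i)<\infty$ is superfluous, and that the statement is exactly the special case of Corollary~\ref{bognartheorem} applied to the connected set $\bigcup_i C_i$.
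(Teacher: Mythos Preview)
Your proof is correct. The paper does not give its own argument here; it merely records that Go\l\k{a}b deduced this as an immediate corollary of \emph{his} theorem (the lower semicontinuity statement). That route is slightly different from yours: one observes that the increasing sequence $(C_i)$ converges to $\overline{\bigcup_i C_i}$ in Hausdorff distance (this is where the hypothesis $\sup_i L^*(C_i)<\infty$ is used, to guarantee uniform boundedness and hence compactness of the closure in the Euclidean setting Go\l\k{a}b worked in), and then Go\l\k{a}b's Theorem gives $L^*(\overline{\bigcup_i C_i})\leq\liminf_i L^*(C_i)=\sup_i L^*(C_i)\leq L^*(\bigcup_i C_i)$, while the reverse inequality is monotonicity. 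Your approach via Corollary~\ref{bognartheorem} is cleaner: it bypasses the convergence argument entirely, applies to arbitrary metric spaces, and makes transparent that the finiteness assumption is superfluous---a point worth stating explicitly, as you suggest. The alternative unwinding through $L_{MC}$ and Lemma~\ref{steiner2} is also fine and faithfully reproduces the remark following Corollary~\ref{bognartheorem}.
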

Go\l\k{a}b deduces this statement as an immediate corollary to his theorem.

Ambrosio and Tilli \cite{Ambrosio2004} give a highly analytical proof of Go\l\k{a}b's Theorem, in particular using densities and weak-star convergence of Borel measures.
This proof was corrected by Paolini and Stepanov \cite{PaoliniStepanov2013}.
Alberti and Ottolini \cite{Alberti2017} give a simpler proof which still uses analysis and measure theory, though.

\subsection{Generalizations of Go\l\k{a}b's Theorem}
Ambrosio and Tilli's proof of Go\l\k{a}b's Theorem \cite{Ambrosio2004} is adapted by Paolini and Stepanov \cite{PaoliniStepanov2013} to show the following generalisation of Go\l\k{a}b's Theorem, of which the Euclidean case appears in Dal Maso and Toader \cite{DalmasoToader2002}.
\begin{theorem}
Let $C_n$ be a sequence of closed connected subsets of a complete metric space $X$ and $C$ a closed subset of $X$ such that $\lim_{n\to\infty}H(C_n,C)=0$.
Let $K_n$ be a sequence of closed subsets of $X$ and $K$ a closed subset of $X$ such that $\lim_{n\to\infty}H(K_n,K)=0$.
Then
\[ L^*(C\setminus K) \leq\liminf_n L^*(C_n\setminus K_n).\]
\end{theorem}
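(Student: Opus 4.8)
The plan is to reduce to the case where $\alpha:=\liminf_n L^*(C_n\setminus K_n)$ is finite and then to run the measure-theoretic argument of Dal Maso--Toader \cite{DalmasoToader2002} and Paolini--Stepanov \cite{PaoliniStepanov2013}, using the methods of the present paper where they suffice and isolating the one place where they do not. Two cautions shape the choice of method. First, $C_n\setminus K_n$ need not be connected, so none of Theorems~\ref{golab}--\ref{lsc} applies to it directly. Second, one cannot repair this by splitting $C\setminus K$ into connected components and arguing componentwise: the $\theta$-curve $C$ (a circle together with one of its diameters), with $K$ the circle, shows that a single component of $C\setminus K$ may have to be ``accounted for'' by several components of $C_n\setminus K_n$. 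What must be used is the Hausdorff convergence of \emph{both} $C_n$ and $K_n$, and this is exactly what a measure-theoretic argument exploits.

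Assuming $\alpha<\infty$, pass to a subsequence with $L^*(C_n\setminus K_n)\to\alpha$. For large $n$ the set $C_n\setminus K_n$ is relatively open in $C_n$, hence a union of connected components; those of positive diameter are connected sets of finite outer linear measure, hence $L^*$-measurable by the remark following Corollary~\ref{bognartheorem}, while the remaining components are singletons. Thus $C_n\setminus K_n$ is $L^*$-measurable and $\mu_n(B):=L^*\big(B\cap(C_n\setminus K_n)\big)$ is a finite Borel measure with $\mu_n(X)\to\alpha$. Passing to a further subsequence (after, if necessary, restricting to a relatively compact part of $X$ around $C$ and invoking tightness from the mass bound, as in the cited works) assume $\mu_n$ converges weakly-* to a finite Borel measure $\mu$; then $\mu(X)\le\liminf_n\mu_n(X)=\alpha$, and it suffices to prove $\mu(B)\ge L^*\big(B\cap(C\setminus K)\big)$ for every Borel $B$, since then $\alpha\ge\mu(X)\ge L^*(C\setminus K)$.

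The contribution of this paper's toolkit is local and elementary. Fix $x\in C\setminus K$. As $K$ is closed there is $\rho_x>0$ with $\overline{B(x,2\rho_x)}\cap K=\emptyset$, and since $e(K_n,K)\to0$ we get $K_n\cap\overline{B(x,\rho_x)}=\emptyset$ for large $n$, so $(C_n\setminus K_n)\cap\overline{B(x,r)}=C_n\cap\overline{B(x,r)}$ whenever $r<\rho_x$ and $n$ is large. Since $e(C,C_n)\to0$ and $x\in C$, $C_n$ meets every ball about $x$ for large $n$, and (outside the trivial case $\diam(C)\le2\rho_x$) it also leaves $\overline{B(x,r)}$; as $C_n$ is connected, the $1$-Lipschitz map $d(x,\cdot)$ attains on $C_n$ every value in $[r/2,r]$, so $L^*\big(C_n\cap\overline{B(x,r)}\big)\ge r/2$. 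Combined with a disjoint finite family of such balls this already bounds $L^*(C\setminus K)$ by a fixed multiple of $\alpha$; the entire difficulty is to replace that multiple by $1$.

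For that, invoke the density comparison theorem for Radon measures: $\mu\ge L^*|_{C\setminus K}$ will follow once $\liminf_{r\to0}\mu\big(B(x,r)\big)/(2r)\ge1$ at $L^*$-almost every $x\in C\setminus K$. With $\rho_x$ as above and $r'<r<\rho_x$ one has $\mu(B(x,r))\ge\mu\big(\overline{B(x,r')}\big)\ge\limsup_n\mu_n\big(\overline{B(x,r')}\big)=\limsup_n L^*\big(C_n\cap\overline{B(x,r')}\big)$, so it is enough to show $\limsup_n L^*\big(C_n\cap\overline{B(x,r')}\big)\ge(1-o(1))\cdot2r$ along a sequence $r\to0$. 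Since $C_n$ is connected, converges to $C$, and has had nothing removed inside $\overline{B(x,r')}$, the mass of $C_n$ there ought to dominate, in the limit, that of $C$ inside a slightly smaller ball (a localised form of Go\l\k{a}b's theorem, for which the argument behind Theorem~\ref{golab}, applied to the connected sets $C_n$, should suffice); and a connected set of finite linear measure has lower linear density $\ge1$ at almost every one of its points, so $C$ — and hence $C\setminus K$ — has linear density $\ge1$ at $L^*$-a.e.\ point, which supplies the needed $2r$. This last fact is part of the structure theory of continua of finite linear measure (it follows, for instance, from the parametrisation theorem of Wa\.zewski recalled in Section~\ref{history}, which exhibits such a continuum as a Lipschitz image of an interval); it is the genuinely non-elementary ingredient and the step I expect to be the main obstacle to a fully self-contained proof. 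Granting it, the density theorem gives $\alpha\ge\mu(X)\ge L^*(C\setminus K)$, which is the assertion, recovering the theorems of \cite{DalmasoToader2002, PaoliniStepanov2013}.
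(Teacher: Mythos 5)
First, a point of orientation: the paper does not prove this theorem at all. It is quoted in the historical survey (Section~\ref{history}) with attribution to Paolini and Stepanov \cite{PaoliniStepanov2013} and, for the Euclidean case, Dal Maso and Toader \cite{DalmasoToader2002}, so there is no in-paper proof to compare your argument against; the relevant comparison is with those references and with \cite{Ambrosio2004}. Judged on its own terms, your proposal is an accurate reconstruction of the strategy used there: pass to weak-$*$ limits of the measures $\mu_n(B)=L^*\bigl(B\cap(C_n\setminus K_n)\bigr)$ and conclude via the density comparison theorem. Your preparatory steps are essentially sound --- the reduction to $\alpha<\infty$, the localisation $(C_n\setminus K_n)\cap\overline{B(x,r)}=C_n\cap\overline{B(x,r)}$ near a point $x\in C\setminus K$, and the connectedness estimate giving lower density $\geq 1/2$ for the limit measure and hence the cheap bound $L^*(C\setminus K)\leq 2\alpha$. (Measurability of $C_n\setminus K_n$ is easier than you make it: the set is Borel and $L^*$ is a metric outer measure; your component-by-component argument does not obviously dispose of an uncountable totally disconnected part.)

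The genuine gap is the one you yourself flag: upgrading the density bound from $1/2$ to $1$. The one concrete mechanism you propose for it does not work. The sets that matter at that point, $C_n\cap\overline{B(x,r')}$ and $C\cap B(x,r'')$, are in general disconnected, so neither Theorem~\ref{golab} nor the Steiner-tree argument behind Theorem~\ref{maintheorem} applies to them; and a ``localised Go\l\k{a}b theorem'' of the kind you invoke is precisely the theorem under proof in the special case of the constant sequence $K_n=K=X\setminus B(x,r')$, so assuming it is circular. The proofs in \cite{Ambrosio2004,PaoliniStepanov2013} close this step by a different route: they first develop the structure theory of continua of finite length (arcwise connectedness, Lipschitz parametrisation, $L^*$-a.e.\ density one), and then, at a.e.\ $x\in C\setminus K$, run a pairing argument that forces the connected sets $C_n$ to carry length close to $2r$ inside $\overline{B(x,r)}$ by joining, through $C_n$, approximations of suitably chosen points of $C$ near the boundary of the ball. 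That machinery lies well outside the elementary toolkit of this paper --- which is presumably why the author states the theorem without proof --- so your write-up should be read as a correct road map whose hardest step remains open, not as a proof.
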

A further generalisation is found in Giacomini \cite{Giacomini2002}:
\begin{theorem}
Let $m\in\bN$, $\Omega$ an open and bounded subset of $\bR^2$, and let $\fhi\colon\overline{\Omega}\times\bR^2\to[0,\infty)$ be continuous and such that for each $x\in\overline{\Omega}$, $\fhi(x,\cdot)$ is a norm on $\bR^2$.
Then the functional $K\mapsto\int_K\fhi(x,\nu_x)\,\mathrm{d}H^1(x)$ is lower semicontinuous on the collection of compact subsets of $\overline{\Omega}$ of finite $1$-dimensional Hausdorff measure with at most $m$ connected components.
\end{theorem}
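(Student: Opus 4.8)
The plan is to derive the theorem from Theorem~\ref{maintheorem} of this paper, applied not in the Euclidean plane but in a geodesic metric on a compact set containing $\overline{\Omega}$ whose infinitesimal length element is exactly $\varphi$. Throughout, write $\Phi(A)=\int_A\varphi(x,\nu_x)\,\mathrm{d}H^1(x)$ for $1$-rectifiable $A$. First I would reduce to connected sets. If $\liminf_n\Phi(K_n)=\infty$ there is nothing to prove, so after passing to a subsequence assume $\sup_n\Phi(K_n)<\infty$; since $\varphi$ is continuous and positive on the compact set $\overline{\Omega}\times\setbuilder{v\in\bR^2}{\lvert v\rvert=1}$, there are constants $0<c_0\le C_0$ with $c_0\lvert v\rvert\le\varphi(x,v)\le C_0\lvert v\rvert$, so $\sup_n H^1(K_n)<\infty$ and each $K_n$, being a finite union of continua of finite length (each $1$-rectifiable, cf.\ Section~\ref{history}), is $1$-rectifiable, as is $K$. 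Writing $K_n=\bigcup_{j=1}^m K_n^{(j)}$ for the connected components of $K_n$ (padded by empty sets), the Blaschke selection theorem and a diagonal argument produce a subsequence along which $K_n^{(j)}\to\hat K^{(j)}$ in Hausdorff distance for each $j$; then each $\hat K^{(j)}$ is a continuum and $K=\bigcup_j\hat K^{(j)}$. Since $\Phi$ is additive over disjoint, and subadditive over arbitrary, finite unions of rectifiable sets, $\liminf_n\Phi(K_n)\ge\sum_{j}\liminf_n\Phi(K_n^{(j)})$ while $\Phi(K)\le\sum_j\Phi(\hat K^{(j)})$; hence it suffices to prove $\Phi(\hat K^{(j)})\le\liminf_n\Phi(K_n^{(j)})$ for each $j$, i.e.\ the theorem for connected sets.

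Second I would construct the metric. Extend $\varphi$ to a continuous field of norms on $\bR^2$ (which is routine, by a convexity-preserving extension of the continuously varying unit balls) and fix a closed ball $\Omega'\supseteq\overline{\Omega}$. For $a,b\in\Omega'$ set
\[ c(a,b)=\inf\setbuilder{\int_0^1\varphi(\gamma(t),\dot\gamma(t))\,\mathrm{d}t}{\gamma\in\operatorname{Lip}([0,1],\Omega'),\ \gamma(0)=a,\ \gamma(1)=b}. \]
Then $c$ is a metric satisfying $c_0\lvert a-b\rvert\le c(a,b)\le C_0\lvert a-b\rvert$, so it induces the Euclidean topology; in particular $(\Omega',c)$ has the same open sets, the same lower Vietoris topology on $\CP(\Omega')$, and Hausdorff distances computed from $c$ and from $\lvert\cdot\rvert$ are bi-Lipschitz equivalent. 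The essential point is that the $1$-dimensional Hausdorff measure $\mathcal{H}^1_c$ of $(\Omega',c)$ satisfies $\mathcal{H}^1_c(A)=\Phi(A)$ for every $1$-rectifiable $A\subseteq\Omega'$: the $c$-metric speed of a $C^1$ curve $\gamma$ at $t$ equals $\lim_{h\to0}c(\gamma(t),\gamma(t+h))/\lvert h\rvert=\varphi(\gamma(t),\dot\gamma(t))$ --- the upper bound coming from the straight chord, the lower bound from the convexity of each $\varphi(x,\cdot)$ together with the uniform continuity of $\varphi$ in $x$, through Jensen's inequality --- so the area formula gives $\mathcal{H}^1_c=\int\varphi(x,\tau_x)\,\mathrm{d}H^1$ on injective $C^1$ arcs, and a rectifiable $A$ is, up to an $H^1$-null set, a countable disjoint union of such arcs whose tangents $\tau_x$ coincide $H^1$-a.e.\ with $\pm\nu_x$, while $\varphi(x,\cdot)$ is even.

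Third I would invoke the paper. Applying Theorem~\ref{maintheorem} in the metric space $(\Omega',c)$ to the connected set $\hat K^{(j)}$, and to each connected $K_n^{(j)}$, shows that the outer linear measure $L^*$ and the Menger--Choquet length $L_{MC}$ computed with the metric $c$ coincide on these sets; combined with the identity above and $1$-rectifiability this gives $\Phi(\hat K^{(j)})=L^*(\hat K^{(j)})=L_{MC}(\hat K^{(j)})$ and $\Phi(K_n^{(j)})=L_{MC}(K_n^{(j)})$, all measures being taken in $(\Omega',c)$. By Lemma~\ref{steiner2} and Proposition~\ref{MClsc} applied in $(\Omega',c)$, $L_{MC}$ is lower semicontinuous on $\CP(\Omega')$ with respect to the lower Vietoris topology, and Hausdorff convergence implies lower Vietoris convergence, so $L_{MC}(\hat K^{(j)})\le\liminf_n L_{MC}(K_n^{(j)})$. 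Therefore $\Phi(\hat K^{(j)})\le\liminf_n\Phi(K_n^{(j)})$, and summing over $j$ as in the first step completes the argument.

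The main obstacle is the identity $\mathcal{H}^1_c=\Phi$ on rectifiable sets, equivalently the statement that $c$ is a bona fide length metric whose line element is exactly $\varphi$. For a constant norm this is the classical description of $1$-dimensional Hausdorff measure in a normed plane; the $x$-dependence is handled by localisation and the uniform continuity of $\varphi$, but carrying this out carefully --- together with the supporting facts that a continuum of finite length is $1$-rectifiable, that $\varphi$ admits a convexity-preserving extension, and that metric derivatives along rectifiable curves behave as expected --- is where essentially all of the analysis lies. Once it is in place, the combinatorial heart of Go\l\k{a}b's theorem, namely that Steiner-tree length is lower semicontinuous and equals outer linear measure on connected sets, is supplied verbatim by Theorem~\ref{maintheorem} and Proposition~\ref{MClsc}, with no further work.
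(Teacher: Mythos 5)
First, note that the paper does not actually prove this statement: it is quoted from Giacomini \cite{Giacomini2002} in the historical section, accompanied only by the one-sentence remark that the functional can be read as the $1$-dimensional Hausdorff measure of a Finsler metric on $\overline{\Omega}$ determined by $\fhi$, whence the result follows from Go\l\k{a}b's Theorem for sets with at most $m$ connected components \cite[Corollary~3.3]{DalmasoToader2002}. Your proposal is precisely that remark worked out in detail --- the component-wise reduction via Blaschke selection and superadditivity of $\liminf$, the induced length metric $c$, and the appeal to Theorem~\ref{maintheorem} and Proposition~\ref{MClsc} in $(\Omega',c)$ --- and its architecture is sound. You are also right to flag that the genuine analytic content (rectifiability of continua of finite length, identification of $\mathcal{H}^1_c$ with a curvilinear integral via metric derivatives) lies entirely outside this paper and must be imported.

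There is, however, one concrete error in the key identification. You assert that the tangents $\tau_x$ of the covering arcs coincide $H^1$-a.e.\ with $\pm\nu_x$; but $\nu_x$ is by definition the unit \emph{normal} of $K$ at $x$, which is perpendicular to $\tau_x$. Consequently the metric $c$ built directly from $\fhi$ satisfies $\mathcal{H}^1_c(A)=\int_A\fhi(x,\tau_x)\,\mathrm{d}H^1(x)$, which differs from $\Phi(A)=\int_A\fhi(x,\nu_x)\,\mathrm{d}H^1(x)$ whenever $\fhi(x,\cdot)$ is not invariant under a quarter turn --- so as written the central identity $\mathcal{H}^1_c=\Phi$ is false for genuinely anisotropic $\fhi$. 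The repair is immediate: run the whole construction with the rotated field of norms $\psi(x,v)=\fhi(x,Rv)$, where $R$ is rotation by $\pi/2$ (still continuous in $x$ and a norm in $v$); then $\mathcal{H}^1_{c_\psi}=\Phi$ on rectifiable sets and the remainder of your argument goes through unchanged. With that correction the proposal is a correct, if necessarily analysis-heavy, expansion of the route the paper indicates.
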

Here $\nu_x$ denotes the unit normal vector of $K$ at $x$ (which exists $H^1$-a.e.\ if $H^1(K)<\infty$).
Giacomini considers $K$ to be a fracture and the above functional is its surface energy.
However, this functional can be interpreted as the ordinary $1$-dimensional Hausdorff measure in the Finsler space on $\overline{\Omega}$ determined by $\fhi$, and therefore, this theorem follows from Go\l{\k{a}}b's Theorem for sets with at most $m$ connected components \cite[Corollary~3.3]{DalmasoToader2002}.

It is easily observed that some sort of connectedness is necessary in Go\l\k{a}b's Theorem.
For example, let $A_n=\set{\frac{1}{n},\frac{2}{n},\dots,\frac{n}{n}}$.
Then $A_n$ converges to $A=[0,1]$, but $L^*(A_n)=0$ and $L^*(A)=1$.

Go\l\k{a}b already notes that there is no straightforward generalisation of his theorem to area.
However, Vitushkin \cite{Vituskin1966} generalises Go\l\k{a}b's Theorem to $k$-dimensional Hausdorff measure in $d$-dimensional Euclidean spaces, by assuming the additional hypothesis that the expected number of connected components of the intersection of $A_n$ by a flat of codimension $k-1$ (with respect to the Haar probability measure on the Grassmannian) is bounded uniformly over $n$.

Dal Maso, Morel and Solimini \cite{Dalmaso1992} generalise Go\l\k{a}b's Theorem for linear measure by assuming that the sets in the sequences uniformly satisfy a certain concentration property.
This is generalised to $k$-dimensional measures by Morel and Solimini \cite[Chapter~10]{Morel1995} and Lops \cite{Lops2005}.

{\small

}
\end{document}